\documentclass{amsart}
\usepackage{graphicx, layout, appendix, subcaption}
\usepackage{amssymb, amsthm, amsmath, amscd, amsfonts, 
mathrsfs, latexsym, mathtools}
\usepackage{placeins}
\usepackage{enumitem}
\usepackage{setspace}
\usepackage[
	setpagesize=false,
	colorlinks=true,
	linkcolor=blue,
	pdfencoding=auto,
	psdextra,
]{hyperref}
\usepackage{cleveref}
\usepackage[final]{microtype}
\usepackage{xcolor}
\hypersetup{
    colorlinks,
    linkcolor={red!50!black},
    citecolor={blue!50!black},
    urlcolor={blue!80!black}
}
\usepackage{ifthen}
\usepackage{amsxtra}
\usepackage{amstext}
\usepackage{pifont}
\usepackage[T1]{fontenc}
\usepackage{textcomp}
\usepackage{tikz-cd}
\setlist[enumerate,1]{label=(\arabic*), ref=(\arabic*)}
\setlist[enumerate,3]{label=(\roman*), ref=(\roman*)}
\theoremstyle{plain}
\newtheorem{thm}{Theorem}[section]
\newtheorem*{thm*}{Theorem}

\newtheorem{lem}[thm]{Lemma}

\newtheorem*{lem*}{Lemma}

\newtheorem{prop}[thm]{Proposition}

\newtheorem*{prop*}{Proposition}

\newtheorem{cor}[thm]{Corollary}

\newtheorem*{cor*}{Corollary}

\newtheorem*{claim*}{Claim}

\newtheorem*{conj*}{Conjecture}

\theoremstyle{definition}

\newtheorem*{defn*}{Definition}

\newtheorem*{ques*}{Question}

\newtheorem{exa}[thm]{Example}

\newtheorem*{exa*}{Example}

\theoremstyle{remark}
\newtheorem{rmk}[thm]{Remark}

\newtheorem*{rmk*}{Remark}

\numberwithin{figure}{section}
\numberwithin{table}{section}
\numberwithin{equation}{section}

\def \Supp {\mathrm{Supp}}

\def \CC {\mathbb{C}}

\def \PP {\mathbb{P}}
\def \QQ {\mathbb{Q}}
\def \RR {\mathbb{R}}

\def \ZZ {\mathbb{Z}}

\def \Ocal {\mathcal{O}}

\def \Xcal {\mathcal{X}}
\def \Ycal {\mathcal{Y}}

\def \hbar {\bar{h}}

\DeclareMathOperator{\Sing}{Sing}
\DeclareMathOperator{\pr}{pr}

\newcommand{\HC}{\operatorname{HC}}

\DeclareMathOperator{\Spec}{Spec}

\DeclareMathOperator{\res}{res}

\DeclareMathOperator{\Pic}{Pic}

\newcommand{\Cone}{\operatorname{Cone}}
\newcommand{\diff}{\mathrm{diff}}
\DeclareMathOperator{\Nef}{Nef}

\newcommand{\NEbar}{\overline{\mathrm{NE}}}

\DeclareMathOperator{\ord}{ord}

\DeclareMathOperator{\length}{length}
\title[Nef cones of nested Hilbert schemes]{Nef cone decompositions for nested Hilbert schemes of points on surfaces}
\author{Chiwon Yoon}
\address{Department of Mathematical Sciences, KAIST, 291 Daehak-ro, Yuseong-gu, Daejeon, 34141, Republic of Korea
}
\email{dbs7985@kaist.ac.kr}

\subjclass[2020]{
    14C05, 
	14E30 
}
\keywords{Hilbert scheme of points, nested Hilbert scheme, nef cone, Mori cone}

\begin{document}
\begin{abstract}
Let $S$ be a smooth projective surface with $q(S)=0$. 
We give numerical criteria for the nef cones of $S^{[n,n+1]}$ and $S^{[1,n]}$ to decompose as sums of pullbacks of nef cones under their natural morphisms.
These criteria recover the known decompositions for the projective plane, Hirzebruch surfaces, and Picard rank one K3 surfaces, and apply to del Pezzo surfaces with $2\le K_S^2\le7$.
For a del Pezzo surface of degree one, we show that the corresponding pullback decompositions fail for both $S^{[n,n+1]}$ and $S^{[1,n]}$.
\end{abstract}
\maketitle

\section{Introduction}
\label{sec:introduction}

Let $S$ be a smooth projective surface over $\CC$ with $q(S)=0$, and let $S^{[n]}$ denote the Hilbert scheme of length-$n$ subschemes of $S$.
We consider the nested Hilbert schemes
\[
S^{[n,n+1]}
=
\{(Z\subset Z')\mid \length(Z)=n,\ \length(Z')=n+1\}
\]
and
\[
S^{[1,n]}
=
\{(p,Z)\in S\times S^{[n]}\mid p\in\Supp(Z)\}.
\]
These spaces admit natural morphisms to $S$ and to Hilbert or symmetric products of $S$, and hence give pullback nef classes. 
We determine when these classes generate the nef cone.

Ryan--Yang \cite{RY18} computed these nef cones for the projective plane, Hirzebruch surfaces, and general Picard rank one K3 surfaces; Dutta--Edwards--Raha \cite{DER25} treated Picard rank two Mori dream K3 surfaces for large $n$.
Here we express the decomposition in terms of the Mori cones $\NEbar(S)$ and $\NEbar(S^{[m]})$.

For $S^{[n,n+1]}$, let
\[
\res:S^{[n,n+1]}\to S,\qquad
p_b:S^{[n,n+1]}\to S^{[n]},\qquad
p_a:S^{[n,n+1]}\to S^{[n+1]}
\]
be the natural morphisms.
For $S^{[1,n]}$, let
\[
\pr_{\diff}:S^{[1,n]}\to S,\qquad
\pr_b:S^{[1,n]}\to S^{(n-1)},\qquad
\pr_a:S^{[1,n]}\to S^{[n]}
\]
be the corresponding morphisms.

\begin{thm}\label{thm:delpezzo-main}
Let $S$ be a del Pezzo surface with
$
2\le K_S^2\le7
$
and let $n\ge2$. 
Then
\[
\Nef(S^{[n,n+1]})
=
\res^*\Nef(S)
+
p_b^*\Nef(S^{[n]})
+
p_a^*\Nef(S^{[n+1]}),
\]
and
\[
\Nef(S^{[1,n]})
=
\pr_{\diff}^*\Nef(S)
+
\pr_b^*\Nef(S^{(n-1)})
+
\pr_a^*\Nef(S^{[n]}).
\]
\end{thm}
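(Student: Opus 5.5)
The plan is to deduce the theorem from the general numerical criteria announced in the abstract. I take these to be proved later in the paper, applicable whenever $q(S)=0$, and to reduce the decomposition to conditions on $\NEbar(S)$ and $\NEbar(S^{[m]})$. The structure would be as follows.

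First, I would describe both sides of the equality via Néron--Severi spaces. Since $q(S)=0$, Fogarty's description gives
\[
N^1(S^{[m]})=N^1(S)\oplus\QQ B .
\]
On the nested schemes, $N^1$ is spanned by $\res^*N^1(S)$ (respectively $\pr_{\diff}^*N^1(S)$), together with the pullbacks of $N^1(S^{[n]})$ and $N^1(S^{[n+1]})$ (respectively $N^1(S^{(n-1)})$ and $N^1(S^{[n]})$). The inclusion $\supseteq$ is automatic, because pullbacks of nef classes are nef. The content is the reverse inclusion, which I would prove dually: exhibit curve classes on $S^{[n,n+1]}$ whose dual cone is exactly the sum of the pullback cones.

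The natural test curves come from three sources. Curves in fibers of $p_a\times p_b$ detect the $\res$ factor. Curves obtained by moving the residual point along a curve $C\subset S$ while keeping $Z$ fixed detect $\res^*\Nef(S)$. Lifts of extremal curves of $\NEbar(S^{[n]})$ and $\NEbar(S^{[n+1]})$ detect the other two factors. Writing a nef class as $\res^*D_0+p_b^*(D_1+aB)+p_a^*(D_2+bB)$, pairing with these curves shows that each piece can be taken nef. This requires the following numerical criterion: for every extremal ray of $\NEbar(S^{[m]})$ with $m=n,n+1$, there is a curve in $S^{[n,n+1]}$ (or $S^{[1,n]}$) that maps to it and is contracted, or has degree zero, under the other two maps. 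I expect this is exactly the criterion the paper formulates.

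Second, I would verify the criterion for del Pezzo surfaces with $2\le K_S^2\le7$. Here $\NEbar(S)$ is generated by the $(-1)$-curves, or by the rulings in the exceptional case $\PP^1\times\PP^1$ (of degree $8$, so excluded). By the known results on nef cones of $S^{[m]}$ for del Pezzo surfaces (Bertram--Coskun type), $\NEbar(S^{[m]})$ is generated by three kinds of classes:
\begin{itemize}
\item curves $C_E$ obtained by moving one point along a $(-1)$-curve $E$;
\item the punctual curve class $\xi$ contracted by Hilbert--Chow;
\item the class of the curve of degree-$m$ subschemes of a conic or line (the $g^1_m$ extremal curve), which bounds the $B$-direction and yields the nef class $H^{[m]}-\tfrac12 B$ type boundary, or $-K_S$-based classes for $m\ge2$.
\end{itemize}
For each generator I would construct an explicit lift. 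For $C_E$, fix $Z$ and move the extra point along $E$, or move a point of $Z$ along $E$ while fixing the residual point. For $\xi$, vary a tangent direction at a point not equal to the residual point. For the line/conic curve, take $Z'$ on a fixed rational curve $R$ with $-K_S\cdot R$ small and fix a subscheme. The condition $K_S^2\ge2$ enters here, since it is what guarantees that the relevant rational curves exist: anticanonical pencils, or conics through a point, in degree $\ge2$ give a $g^1_2$ on the curve. For $S^{[1,n]}$ the argument is parallel, with $S^{(n-1)}$ replacing $S^{[n]}$; its nef cone is simpler, being pulled back from $\Nef(S)$ symmetrized.

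The main obstacle is the extremal ray of $\NEbar(S^{[m]})$ in the $B$-direction. One must show that its lift can be chosen so that it is simultaneously extremal for the relevant maps, with matching $B$-coefficients for $p_a$ and $p_b$. In degree one this fails: the extremal ray there comes from the base point of $|-K_S|$, and the abstract says the decomposition then fails. So I would focus on checking that for $K_S^2\ge2$, where $|-K_S|$ is base-point free, the extremal curve on $S^{[n+1]}$ can be realised by subschemes moving in a pencil containing a fixed point. That fixed point is then usable as $\res$ or as the fixed $Z$.
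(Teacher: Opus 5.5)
\textbf{Gap 1: the test-curve criterion you propose cannot hold.} Your plan is to force each piece of $D=\res^*D_0+p_b^*(\cdots)+p_a^*(\cdots)$ to be nef by pairing $D$ with lifts of extremal rays that are contracted by the other two maps. No such lift exists for a non-punctual ray. The map $\HC_{n+1}\circ p_a$ factors as the addition map $S\times S^{(n)}\to S^{(n+1)}$ composed with $(\res,\HC_n\circ p_b)$, so $L^a=L^{\diff}+L^b$ in $N^1(S^{[n,n+1]})_\RR$. Now suppose $\Gamma$ satisfies $\res_*\Gamma=0=(p_b)_*\Gamma$ and $(p_a)_*\Gamma=c\,C_E^{[n+1]}$ with $c>0$. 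Then $c\,L\cdot E=L^a\cdot\Gamma=0$ for every $L$, which is impossible. The same computation excludes pure lifts of $E$ and of $C_E^{[n]}$; only punctual rays admit them (e.g.\ $A_a$, $A_b$). Also, $(p_b,p_a)$ is a closed embedding, so its fibers contain no curves.

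The same relation shows the three-piece expression is not unique. For $D=X^{\diff}+Y^b+uB^{\diff}+vB^b$ and any $M\in N^1(S)_\RR$,
\[
D=\res^*(X-M)+p_b^*\bigl((Y-M)^{[n]}+(v-u)B^{[n]}\bigr)+p_a^*\bigl(M^{[n+1]}+uB^{[n+1]}\bigr).
\]
The available test curves only control sums of two pieces: $C_E^a$ sees $\res$ and $p_a$, and $C_E^b$ sees $p_b$ and $p_a$. The real content of the theorem is a single choice of $M$ that makes all three pieces nef at once. Your plan never addresses this, and the paper's criterion is exactly the existence of such an $M$. For $K_S^2\le6$ there are more $(-1)$-curves than $\rho$, so the numbers $M\cdot E$ cannot be prescribed curve by curve.

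\textbf{Gap 2: your description of $\NEbar(S^{[m]})$ is wrong.} This sends you after an obstacle that does not exist. For $2\le K_S^2\le7$, Bertram--Coskun give $\NEbar(S^{[m]})=\Cone\langle\{C_E^{[m]}\}_E,A^{[m]}\rangle$. A $g^1_m$ on a $(-1)$-curve is numerically $C_E^{[m]}$, and conic classes are not extremal. The anticanonical class $F_{[m]}$ becomes a new extremal ray only in degree one, and it comes from a $g^1_m$ on a smooth elliptic member of $|-K_S|$, not from the base point. So $K_S^2\ge2$ enters through this Mori cone description, and $K_S^2\le7$ through $\NEbar(S)$ being spanned by $(-1)$-curves. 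Nothing needs to be lifted ``through a fixed point''.

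\textbf{The missing idea} is to take $M=-2nu(-K_S)$. The $p_a$-piece then becomes $-2u\bigl(n(-K_S)^{[n+1]}-\tfrac12B^{[n+1]}\bigr)$.
\begin{itemize}
\item It is nonnegative on $A^{[n+1]}$, since $D\cdot A_a=-2u\ge0$.
\item It vanishes on every $C_E^{[n+1]}$, because $-K_S\cdot E=1$ for all $(-1)$-curves.
\item Consequently $(X-M)\cdot E=D\cdot C_E^a\ge0$.
\item The $p_b$-piece pairs with $C_E^{[n]}$ to $D\cdot C_E^b\ge0$ and with $A^{[n]}$ to $D\cdot A_b\ge0$.
\end{itemize}
This is the only sense in which your ``degree zero under the other maps'' is realized: after this choice, $C_E^a$ and $C_E^b$ are trivial against the $p_a$-piece. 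For $S^{[1,n]}$, with $D=X^{\diff}+Y^b+uB^a$, the same argument works with $M=-2(n-1)u(-K_S)$ and test curves $C_E^{\diff}$, $C_E^a$, $A_a$.
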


The degree-one case is different. 
Let $S$ be a degree-one del Pezzo surface and put $F=-K_S$.
In addition to the punctual class and the classes associated with the $240$ $(-1)$-curves \cite[Chapter~8]{dolgachev2012classical}, the Mori cone of $S^{[m]}$ contains the extremal ray $F_{[m]}$, obtained from a $g^1_m$ on a smooth anticanonical curve.
The Mori-cone description is given in \cite[Theorem~5.1]{BHL+16}; see also the correction \cite{BHL+16erratum}. 
This additional ray leads to the failure of the pullback decompositions.

\begin{thm}\label{thm:intro-degree-one}
Let $S$ be a del Pezzo surface of degree $1$ and let $n\ge2$. Then
\[
\res^*\Nef(S)
+
p_b^*\Nef(S^{[n]})
+
p_a^*\Nef(S^{[n+1]})
\subsetneq
\Nef(S^{[n,n+1]}),
\]
and
\[
\pr_{\diff}^*\Nef(S)
+
\pr_b^*\Nef(S^{(n-1)})
+
\pr_a^*\Nef(S^{[n]})
\subsetneq
\Nef(S^{[1,n]}).
\]
\end{thm}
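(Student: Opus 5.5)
To prove the strict inclusions in Theorem~\ref{thm:intro-degree-one}, I will dualize. The sum of the three pullback cones is the dual of the cone of curves $\gamma$ that are nonnegative against all three pullbacks. Equivalently, it is dual to the cone cut out by the conditions
\[
\res_*\gamma\in\NEbar(S),\qquad (p_b)_*\gamma\in\NEbar(S^{[n]}),\qquad (p_a)_*\gamma\in\NEbar(S^{[n+1]}).
\]
So it suffices to exhibit a curve class $\gamma$ in $N_1(S^{[n,n+1]})$ that satisfies these three pushforward conditions but is \emph{not} in $\NEbar(S^{[n,n+1]})$. Then some nef divisor on $S^{[n,n+1]}$ is negative on $\gamma$, and that divisor lies outside the pullback sum. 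The same strategy applies verbatim to $S^{[1,n]}$, with $\pr_{\diff},\pr_b,\pr_a$ in place of $\res,p_b,p_a$.

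\textbf{Coordinates.} Since $q(S)=0$, we have $N^1(S^{[n,n+1]})\cong N^1(S)^{\oplus 2}\oplus\RR B$, where $B$ is the boundary class; equivalently it is spanned by $\res^*N^1(S)$, $p_b^*N^1(S^{[n]})$ and $p_a^*N^1(S^{[n+1]})$ modulo relations. I will write a curve class as a triple $(\res_*\gamma,(p_b)_*\gamma,(p_a)_*\gamma)$, subject to one linear compatibility relation coming from the relation $p_a^*H_{[n+1]}=p_b^*H_{[n]}+\res^*H$ for $H\in N^1(S)$.

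\textbf{Candidate class.} For $\gamma$ I will take a combination built from the extra ray $F_{[m]}$. On a smooth anticanonical curve $C\in|F|$ (an elliptic curve with $F^2=1$), take a pencil $g^1_{n+1}$ together with a point-varying family. Concretely, the natural test class has $(p_a)_*\gamma=F_{[n+1]}$ and $(p_b)_*\gamma=F_{[n]}$, and $\res_*\gamma$ is determined by the compatibility relation. That determined class should be a small multiple of $F$, or possibly a negative-looking combination that is still effective because it is $F$. The aim is to choose the coefficients, using the precise numerical form of $F_{[m]}$ from \cite[Theorem~5.1]{BHL+16} (intersection numbers with $H_{[m]}$ and with $B_{[m]}$), so that all three pushforwards are effective but the class itself is not.

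\textbf{Showing non-effectivity.} I will produce a nef divisor on $S^{[n,n+1]}$ that is negative on $\gamma$. The natural candidate is a divisor of the form $p_a^*D_{n+1}+p_b^*D_n-\epsilon\,\res^*F$ or similar, with $D_m$ boundary divisors of $\Nef(S^{[m]})$ dual to $F_{[m]}$. Its nefness can be checked against a list of curves generating $\NEbar(S^{[n,n+1]})$. An alternative is to argue directly: any curve $\Gamma$ in $S^{[n,n+1]}$ whose images realize $F_{[n]}$ and $F_{[n+1]}$ must have $Z\subset Z'\subset C$ moving in pencils on an elliptic curve. The residual point then moves along $C$ with a degree forced by Abel's theorem, since $Z'-Z$ is determined by the difference of two linear systems. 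This degree is larger than what $\gamma$ prescribes, which yields the contradiction.

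\textbf{Main obstacle.} The hardest step is this last one: proving that $\gamma$ is not in $\NEbar$, including limits of effective classes, rather than merely not represented by an obvious curve. My first attempt will be to build the explicit nef divisor, checking positivity via a degeneration/Abel–Jacobi argument on $C$ together with the known extremal curves of $S^{[m]}$. For $S^{[1,n]}$, I expect the analogous class to have $\pr_b$-image $0$ or a point class in $S^{(n-1)}$, which makes the computation simpler.
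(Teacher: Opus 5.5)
\textbf{The dualization is right, but the witness class fails.} Your setup matches the paper's. With $P$ the sum of the three pullback cones, $P^\vee$ is the cone of classes whose three pushforwards are pseudo-effective, and you need a class in $P^\vee\setminus\NEbar(\Xcal)$.

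Your concrete test class is effective. Since $H^{[m]}\cdot F_{[m]}=H\cdot F$ for both $m=n$ and $m=n+1$, the relation $H^a=H^{\diff}+H^b$ forces $\res_*\gamma=0$. Because $N^1(\Xcal)_\RR$ is spanned by pullbacks ($H^{\diff}$, $H^b$, $B^b$, $B^a$), $\gamma$ is then determined by its pushforwards $(0,F_{[n]},F_{[n+1]})$. This class is realized by an actual curve:
\begin{itemize}
\item Take a smooth $C\in|F|$, a point $x\in C$, and a base-point-free $g^1_n$ $\{Z_t\}$ on $C$, and let $\Gamma_x=\{(Z_t\subset Z_t+x)\}\subset\Xcal$.
\item Then $\res_*\Gamma_x=0$ and $(p_b)_*\Gamma_x=F_{[n]}$.
\item The image $p_a(\Gamma_x)$ is a line in $|Z_t+x|\cong\PP^n\subset C^{(n+1)}$, hence numerically $F_{[n+1]}$. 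Equivalently, $\mathcal E\cdot\Gamma_x=1$, so $B^a\cdot\Gamma_x=2n+2$.
\end{itemize}
So $\gamma\in\NEbar(\Xcal)$ and no nef divisor is negative on it. Your divisor $p_a^*D_{n+1}+p_b^*D_n-\epsilon\res^*F$ is in fact zero on it. The Abel--Jacobi heuristic points the same way: Abel's theorem forces the residual point to be \emph{constant}, which is exactly what $\res_*\gamma=0$ prescribes, so no contradiction can arise. The analogous $S^{[1,n]}$ guess, with $(\pr_b)_*\gamma=0$ and $(\pr_a)_*\gamma=F_{[n]}$, forces $(\pr_{\diff})_*\gamma=F$. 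When $|F|$ has a nodal member $N$, that class is realized by the moving-point curve $\{(r,q_1+\cdots+q_{n-1}+r)\}$ along $N$ with $q_1$ the node.

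\textbf{The missing construction and the missing non-effectivity argument.} The paper mixes rays. It pairs a $(-1)$-curve $E$ with $D=2F-E$, which is again a $(-1)$-curve with $E\cdot D=3$. It uses the relation $C_E^{[m]}+C_D^{[m]}=2F_{[m]}+2A^{[m]}$ and takes $T=C_E^a+C_D^b-2A_a$. The pushforwards of $T$ are $E$, $C_D^{[n]}$ and $2F_{[n+1]}$, so only the $p_a$-image lies on the new ray; the negative $A_a$-coefficient is what makes $T$ non-effective.

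Your second step is also not viable as stated. No list of generators of $\NEbar(S^{[n,n+1]})$ is available to test a nef divisor against; that is the unknown. The paper argues as follows instead:
\begin{enumerate}
\item It builds a semiample $L=\res^*N_E+p_b^*Q_D+p_a^*Q_F$ with $P^\vee\cap L^\perp=\RR_{\ge0}T$.
\item If $T$ were pseudo-effective, $L$ would not be ample. Its contraction then contracts an actual irreducible curve $\Gamma$, whose class must be $aT$. This is how limits of effective classes, your stated main obstacle, are handled.
\item A coincidence count on the blow-up $q=(\res,p_b)$ gives $\mathcal E\cdot\Gamma\le(n+2)a$. This contradicts $\mathcal E\cdot T=n+3$.
\end{enumerate}
The $S^{[1,n]}$ case uses $T_{\mathrm{univ}}=C_E^{\diff}+C_D^a-2A_a$, whose $\pr_b$-image is $C_D^{(n-1)}$ rather than $0$. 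It runs the same mechanism, passing through $S^{[n-1,n]}\to S^{[1,n]}$ when $n\ge3$.
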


We also consider the Picard-rank-two K3 cases of \cite{DER25}, whose wall-crossing methods are based on the framework developed in \cite{bayer2014mmp}.
When the Mori cone of the K3 surface is generated by two smooth rational curves, their description of the Mori cone of the Hilbert scheme fits directly into the criterion developed here.
For an elliptic K3 surface, the extremal curves are pencil curves, and the corresponding dual-cone argument is slightly different.

The paper is organized as follows.
Section~\ref{sec:background} introduces the divisor and curve conventions and establishes the required intersection formulas. 
Section~\ref{sec:nested} develops the criterion for $S^{[n,n+1]}$ and applies it to del Pezzo and K3 surfaces.
Section~\ref{sec:universal} treats the corresponding criterion for $S^{[1,n]}$. 
Section~\ref{sec:degree-one} proves the failure of the pullback decomposition for $S^{[n,n+1]}$ in degree one, and Section~\ref{sec:degree-one-universal} gives the analogous result for $S^{[1,n]}$.

\subsection*{Acknowledgments}
This work was supported by the Institute for Basic Science (IBS-R032-D1).

\section{Background}
\label{sec:background}

Throughout, $S$ is a smooth projective surface over $\CC$ with $q(S)=0$, and we set $\rho:=\dim N^1(S)_\RR$. 
Write $\NEbar(Y)$ for the closed cone of effective curve classes in $N_1(Y)_\RR$. Since $q(S)=0$, we use Fogarty's description of the Picard groups of Hilbert schemes~\cite{Fogarty1973AlgebraicFO}.

\subsection{Hilbert schemes of points}
\label{subsec:hilbert-background}
For $m\ge2$, let $\HC_m:S^{[m]}\longrightarrow S^{(m)}$ be the Hilbert--Chow morphism.
For $L\in\Pic(S)$, let $L^{(m)}$ be the divisor class on $S^{(m)}$
descended from the symmetric external product on $S^m$.
Set $L^{[m]}:=\HC_m^*L^{(m)}$.
Let $B^{[m]}$ be the Hilbert--Chow boundary divisor. 
We follow the notation of \cite[Section~4.1]{RY18} for the following curve classes.

Let $\gamma\subset S$ be a reduced irreducible nodal curve, and set
$\delta(\gamma):=\#\Sing(\gamma)$.
Assume $m>\delta(\gamma)$.
Choose distinct points $q_1,\dots,q_{m-1}\in\gamma$ containing all nodes of $\gamma$, and define
\[
C_\gamma^{(m)}
:=
\{q_1+\cdots+q_{m-1}+r\mid r\in\gamma\}
\subset S^{(m)},
\]
\[
C_\gamma^{[m]}
:=
\{q_1+\cdots+q_{m-1}+r\mid r\in\gamma\}
\subset S^{[m]}.
\]
Here $C_\gamma^{[m]}$ denotes the closure of the locus obtained by letting $r$ vary in $\gamma_{\mathrm{sm}}\setminus\{q_1,\dots,q_{m-1}\}$.
At a node, the limit is the corresponding length-two subscheme with that tangent direction.

\begin{lem}\label{lem:nodal-boundary-intersection}
Let $\gamma\subset S$ be a reduced irreducible nodal curve and assume
$m>\delta(\gamma)$. 
If the distinct points $q_1,\dots,q_{m-1}\in\gamma$ contain all nodes of $\gamma$, then
\[
B^{[m]}\cdot C_\gamma^{[m]}
=
2\bigl(m-1+\delta(\gamma)\bigr).
\]
\end{lem}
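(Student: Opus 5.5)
The plan is to compute the intersection number locally, by pulling $B^{[m]}$ back along a parametrization of $C_\gamma^{[m]}$ by the normalization of $\gamma$. Let $\nu:\tilde\gamma\to\gamma$ be the normalization. Over each node $q\in\Sing(\gamma)$ there are two points $q',q''\in\tilde\gamma$, one for each branch. Over each smooth point $q_i$ there is a single point $\tilde q_i$. Define $\phi:\tilde\gamma\to S^{[m]}$ on the open set where $\nu(t)\notin\{q_1,\dots,q_{m-1}\}$ by $t\mapsto q_1+\cdots+q_{m-1}+\nu(t)$. Since $\tilde\gamma$ is a smooth curve and $S^{[m]}$ is projective, $\phi$ extends uniquely to all of $\tilde\gamma$.

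The extended map is birational onto $C_\gamma^{[m]}$, because it is injective on a dense open set. At $\tilde q_i$ the limit is the length-two scheme at $q_i$ tangent to $\gamma$. At $q'$ and $q''$ the limits are the two length-two schemes at $q$ tangent to the two branches, matching the description in the definition. Therefore
\[
B^{[m]}\cdot C_\gamma^{[m]}=\deg\phi^*B^{[m]}.
\]
The image of $\phi$ meets the non-reduced locus exactly at the points $\tilde q_1,\dots,\tilde q_{m-1}$ (with $q_i$ smooth) and $q',q''$ (for each node $q$). This gives $(m-1-\delta)+2\delta=m-1+\delta$ points in total. It then suffices to show that each point contributes local multiplicity exactly $2$.

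For the local computation, fix one such point $t_0$, with image point $p=\nu(t_0)=q_j$. Near $\phi(t_0)$, the scheme $\phi(t)$ is the disjoint union of the $m-2$ fixed reduced points and the length-two scheme $\{q_j,\nu(t)\}$. So locally $S^{[m]}$ is a product of $S^{[2]}$ (near a point over the diagonal) with a smooth factor, and $B^{[m]}$ pulls back to $B^{[2]}\times(\text{smooth factor})$. Use $S^{[2]}=\mathrm{Bl}_\Delta(S\times S)/\ZZ_2$. The quotient $\pi$ is a double cover branched along the exceptional divisor $E$, with $\pi^*B^{[2]}=2E$. The arc $t\mapsto(q_j,\nu(t))$ in $S\times S$ lifts to $\mathrm{Bl}_\Delta(S\times S)$, because the branch through $t_0$ is smooth with a well-defined tangent direction. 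This lift meets $E$ transversally: the distance $\nu(t)-q_j$ vanishes to order one in a local parameter $t$, since each branch, and each smooth point, is a smooth arc. Consequently $\phi$ factors locally through $\pi$, and the local intersection multiplicity of $\phi^*B^{[2]}$ at $t_0$ equals $2\cdot1=2$.

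Summing over the $m-1+\delta(\gamma)$ points gives $2(m-1+\delta(\gamma))$, as claimed. The main obstacle I expect is justifying rigorously that $\phi$ locally factors through the double cover $\mathrm{Bl}_\Delta(S\times S)$ and that its lift meets $E$ transversally at the node branches. The subtle point is that at a node the other branch also passes through $q$, but it does not affect the chosen arc. I would handle this in local analytic coordinates where the branch is $y=0$ and $\nu(t)=(t,0)$, so the lift is explicitly $(q,(t,0))$ with exceptional coordinate $t$. One should also confirm that the paper's convention has $B^{[m]}$ as the reduced boundary divisor, consistent with the value $2(m-1)$ when $\gamma$ is smooth.
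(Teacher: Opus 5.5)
Your proposal is correct and follows the same route as the paper: you parametrize $C_\gamma^{[m]}$ by the normalization, count $m-1+\delta(\gamma)$ collision points (one over each smooth $q_i$, two over each node), and assign multiplicity $2$ to each. Your local argument through the double cover $\mathrm{Bl}_\Delta(S\times S)\to S^{[2]}$, using $\pi^*B^{[2]}=2E$ and the fact that the lifted arc meets $E$ transversally, justifies the multiplicity-$2$ claim that the paper only asserts.
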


\begin{proof}
Let $\nu:\widetilde{\gamma}\longrightarrow\gamma$ be the normalization.
The moving-point family defining $C_\gamma^{[m]}$ is parametrized by $\widetilde{\gamma}$.
The normalization contributes one preimage for each fixed smooth point and two for each node, giving
$m-1+\delta(\gamma)$ collisions. Each collision has multiplicity $2$ along the Hilbert--Chow boundary.
Thus the stated intersection number follows.
\end{proof}

Let $w\subset S$ be a smooth irreducible curve admitting a base-point-free $g^1_m$, equivalently a degree-$m$ morphism $f:w\longrightarrow\PP^1$.
The fibers of $f$ give a rational curve in $S^{[m]}$ with class $w_{[m]}$.
For every $L\in\Pic(S)$,
\[
L^{[m]}\cdot w_{[m]}=L\cdot w,
\qquad
B^{[m]}\cdot w_{[m]}=2g(w)-2+2m.
\]

Fix distinct points $p,q_1,\dots,q_{m-2}\in S$. 
Let $A^{[m]}$ be the curve obtained by varying a length-two subscheme $\xi$ supported at $p$ and setting $Z=\xi+q_1+\cdots+q_{m-2}$.
The intersection numbers \cite{RY18} are
\[
\begin{array}{c|cc}
 & L^{[m]} & B^{[m]}\\
\hline
C_\gamma^{[m]} & L\cdot\gamma & 2\bigl(m-1+\delta(\gamma)\bigr)\\
w_{[m]} & L\cdot w & 2\bigl(m-1+g(w)\bigr)\\
A^{[m]} & 0 & -2.
\end{array}
\]
Also, $L^{(m)}\cdot C_\gamma^{(m)}=L\cdot\gamma$.

\begin{lem}\label{lem:delpezzo-hilbert-mori}
Let $S$ be a del Pezzo surface with $2\le K_S^2\le7$. 
Then, for every $m\ge2$,
\[
\NEbar(S^{[m]})
=
\Cone\left\langle
\{C_E^{[m]}\}_E,\,
A^{[m]}
\right\rangle,
\]
where $E$ runs through the $(-1)$-curves on $S$.
\end{lem}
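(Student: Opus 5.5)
The plan is to prove the equality by a dual-cone argument. Since $q(S)=0$, Fogarty's theorem gives $N^1(S^{[m]})_\RR=\{L^{[m]}\}\oplus\RR B^{[m]}$. Let $K$ denote the cone spanned by the $C_E^{[m]}$ and $A^{[m]}$. The inclusion $K\subseteq\NEbar(S^{[m]})$ is clear because these are classes of effective curves. For the reverse inclusion it is enough to show that $K^\vee\subseteq\Nef(S^{[m]})$. Equivalently, every class that is nonnegative on the generators of $K$ must be nef.

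Write $D=L^{[m]}-\tfrac{a}{2}B^{[m]}$. Using the table of intersection numbers:
\[
D\cdot A^{[m]}=a\ge0,\qquad D\cdot C_E^{[m]}=L\cdot E-a(m-1)\ge0\quad\text{for all }(-1)\text{-curves }E.
\]
We have $\delta(E)=0$ since each $E$ is smooth rational. For $S$ a del Pezzo surface with $2\le K_S^2\le 7$, the Mori cone $\NEbar(S)$ is generated by the $(-1)$-curves. Hence the condition on the $C_E^{[m]}$ says exactly that $L-a(m-1)H$ is nef, where $H$ is the class with $H\cdot E=1$ for every $E$. This class is $H=-K_S$, since $-K_S\cdot E=1$.

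So $K^\vee$ is spanned by two kinds of classes:
\begin{enumerate}
\item $N^{[m]}$ with $N\in\Nef(S)$, obtained with $a=0$;
\item $D_a:=N^{[m]}+a\bigl((m-1)(-K_S)\bigr)^{[m]}-\tfrac a2B^{[m]}$ with $N$ nef and $a>0$.
\end{enumerate}
Rescaling, it suffices to show that $\bigl((m-1)H\bigr)^{[m]}-\tfrac12B^{[m]}$ is nef with $H=-K_S$, since the $N^{[m]}$ terms are nef as pullbacks under $\HC_m$.

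The key step is to prove that $\bigl((m-1)H\bigr)^{[m]}-\tfrac12 B^{[m]}$ is nef. The standard route is to show that $(m-1)H$ is $(m-1)$-very ample; by the Beltrametti--Sommese/Catanese--Göttsche criterion this makes the class globally generated. The class is $\bigl((m-1)H\bigr)^{[m]}-\tfrac12B^{[m]}$, the determinant of the tautological bundle twisted appropriately, i.e.\ the pullback of $\Ocal(1)$ under the Grassmannian map $Z\mapsto H^0\bigl(\Ocal_Z((m-1)H)\bigr)$.

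\begin{itemize}
\item For $K_S^2\ge3$, $-K_S$ is very ample, hence $1$-very ample. Since $k$-very ampleness is additive, $(m-1)(-K_S)$ is $(m-1)$-very ample.
\item For $K_S^2=2$, $-K_S$ is only base point free, giving the double cover of $\PP^2$, so this additive argument breaks down. I expect this to be the main obstacle. My first attempt would be the Di Rocco-type numerical criterion for $k$-very ampleness on del Pezzo surfaces, which in degree $\ge2$ gives $k$-very ampleness of $L$ when $L\cdot E\ge k$ for all $(-1)$-curves and $L\cdot F\ge k+1$ (or a similar bound) on conic classes. I would apply it to $L=(m-1)(-K_S)$ and check the inequality, perhaps after replacing $(m-1)(-K_S)$ by a suitable nef class $H$ still satisfying $H\cdot E=m-1$.
\item If that fails, a fallback is to verify nefness directly: bound $D\cdot C$ for any irreducible curve $C\subset S^{[m]}$ by reducing, via degeneration, to curves in the boundary and to curves supported on an anticanonical curve. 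In degree two, anticanonical curves have genus $1$ and gonality $2$. Using $w_{[m]}$-type estimates, $D\cdot w_{[m]}=(m-1)\cdot 2-(m-1+1)\ge0$ when $m\ge2$, which is consistent.
\end{itemize}

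Finally, since $K^\vee$ is generated by nef classes, $K^\vee\subseteq\Nef$. Taking duals gives $\NEbar\subseteq K$, and hence equality.
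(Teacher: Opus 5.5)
Your reduction is correct. $K^\vee$ is generated by the classes $N^{[m]}$ with $N$ nef, together with the single class $D_m:=\bigl((m-1)(-K_S)\bigr)^{[m]}-\tfrac12B^{[m]}$, so everything hinges on the nefness of $D_m$. For $K_S^2\ge3$ your argument proves it: $-K_S$ is very ample, $k$-very ampleness is additive, and the Grassmannian map globally generates $\det$ of the tautological bundle.

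\textbf{The gap is the case $K_S^2=2$,} which the lemma covers and which you leave open. Your proposed route cannot work there as stated.
\begin{itemize}
\item For $m=2$ the line bundle in question is $-K_S$ itself. On a degree-two del Pezzo surface it is not very ample, since it does not separate the two points of a fiber of the double cover $\phi\colon S\to\PP^2$. So it is not $1$-very ample, and no $k$-very ampleness criterion can certify it.
\item The version of Di Rocco's criterion you recall ($L\cdot E\ge k$ on $(-1)$-curves, $L\cdot F\ge k+1$ on conics) would wrongly declare $-K_S$ very ample, since $-K_S\cdot E=1$ and $-K_S\cdot F=2$. So it is misremembered.
\item Replacing $(m-1)(-K_S)$ by another class $H$ with $H\cdot E=m-1$ for all $E$ is impossible. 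The $(-1)$-curves span $N_1(S)_\RR$, so any such $H$ is numerically $(m-1)(-K_S)$.
\item Your fallback only tests $D_m$ against the single class $w_{[m]}$, which is a consistency check, not a proof of nefness.
\end{itemize}
So the key step is unproved in a case the statement includes.

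The paper avoids this by citing Bertram--Coskun \cite[Theorem~2.4]{BC13}. That result gives $\Nef(S^{[m]})$ directly as the inequalities $c\le0$ and $L\cdot E+(m-1)c\ge0$, and the paper then dualizes exactly as you do.

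To make your argument self-contained at $(K_S^2,m)=(2,2)$, argue as follows.
\begin{itemize}
\item Use the sections $\Lambda^2H^0(-K_S)$ of $\det\bigl((-K_S)^{[2]}\bigr)$, whose class is $D_2$.
\item They all vanish precisely where $H^0(-K_S)\to H^0(\Ocal_Z(-K_S))$ drops rank. This is the surface $\Sigma\cong\PP^2$ of fibers $\phi^{-1}(x)$, including the nonreduced fibers over the branch quartic. Hence $D_2\cdot C\ge0$ for every curve $C\not\subset\Sigma$.
\item Curves in $\Sigma$ are numerically multiples of $w_{[2]}$, where $w=\phi^{-1}(\ell)$ is an anticanonical genus-one curve with the $g^1_2$ induced by $\phi$. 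For this class $D_2\cdot w_{[2]}=2-\tfrac12\cdot4=0$.
\end{itemize}
Hence $D_2$ is nef. For $m\ge3$ in degree two you would still need either the correct statement of Di Rocco's criterion or an analogous base-locus argument.
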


\begin{proof}
Bertram and Coskun determine $\Nef(S^{[m]})$ in \cite[Theorem~2.4]{BC13}. 
Writing a divisor as $L^{[m]}+\frac{c}{2}B^{[m]}$, the inequalities in
\cite[Theorem~2.4]{BC13} are
\[
c\le0,
\qquad
L\cdot E+(m-1)c\ge0
\]
for the $(-1)$-curves $E\subset S$.
By the intersection table,
$\left(L^{[m]}+\frac{c}{2}B^{[m]}\right)\cdot A^{[m]}=-c$
and
$\left(L^{[m]}+\frac{c}{2}B^{[m]}\right)\cdot C_E^{[m]}=L\cdot E+(m-1)c$.
Hence the facets of $\Nef(S^{[m]})$ are dual to $A^{[m]}$ and the $C_E^{[m]}$, which gives the stated description of $\NEbar(S^{[m]})$.
\end{proof}

\subsection{The nested Hilbert scheme $S^{[n,n+1]}$}
\label{subsec:nested-background}
Set $\Xcal:=S^{[n,n+1]}$.
The natural morphisms are
$\res:\Xcal\to S$, $p_b:\Xcal\to S^{[n]}$, and
$p_a:\Xcal\to S^{[n+1]}$,
where $\res(Z\subset Z')$ is the support of $Z'/Z$.
For $L\in\Pic(S)$, set
$L^{\diff}:=\res^*L$, $L^b:=p_b^*L^{[n]}$, and
$L^a:=p_a^*L^{[n+1]}$.
Similarly, set
$B^a:=p_a^*B^{[n+1]}$, $B^b:=p_b^*B^{[n]}$, and
$B^{\diff}:=B^a-B^b$.
\begin{lem}\label{lem:La=Ldiff+Lb-nested}
For every $L\in\Pic(S)$,
\[
L^a=L^{\diff}+L^b
\]
in $N^1(\Xcal)_\RR$.
\end{lem}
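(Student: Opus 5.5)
The identity should already hold before passing to numerical classes, as an equality of line bundles, by comparing pullbacks from the symmetric products. Consider the composites
\[
\HC_{n+1}\circ p_a:\Xcal\to S^{(n+1)},\qquad
(\HC_n\circ p_b,\res):\Xcal\to S^{(n)}\times S .
\]
Let $\sigma:S^{(n)}\times S\to S^{(n+1)}$ be the addition map. Then
\[
\HC_{n+1}\circ p_a=\sigma\circ(\HC_n\circ p_b,\res).
\]
This holds because the cycle of $Z'$ is the cycle of $Z$ plus the point $\res(Z\subset Z')$. Both sides are morphisms and $\Xcal$ is reduced (it is smooth, being the nested Hilbert scheme of a surface), so it suffices to check equality on closed points, where it is exactly the length additivity of $Z'$ in the exact sequence $0\to Z'/Z\to\Ocal_{Z'}\to\Ocal_Z\to0$.

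Next I compute $\sigma^*L^{(n+1)}$. Pull back further to $S^n\times S=S^{n+1}$ via the quotient map $\pi_n\times\mathrm{id}$. The composite $S^{n+1}\to S^{(n+1)}$ is the quotient map, and the pullback of $L^{(n+1)}$ to $S^{n+1}$ is $\boxtimes_{i=1}^{n+1}L$. This equals $(\pi_n\times\mathrm{id})^*\bigl(L^{(n)}\boxtimes L\bigr)$.

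Since $\pi_n\times\mathrm{id}$ is a finite surjective map between normal varieties, pullback on $\Pic\otimes\QQ$, and hence on $N^1$, is injective. Therefore
\[
\sigma^*L^{(n+1)}=\mathrm{pr}_1^*L^{(n)}+\mathrm{pr}_2^*L
\]
in $N^1(S^{(n)}\times S)_\RR$. If one wants an integral statement, the $\mathfrak S_n$-linearization argument also gives it, but $N^1$ with real coefficients is all that is needed.

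Finally, pull this identity back along $(\HC_n\circ p_b,\res)$:
\[
L^a=p_a^*\HC_{n+1}^*L^{(n+1)}=p_b^*\HC_n^*L^{(n)}+\res^*L=L^b+L^{\diff}.
\]

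The main point requiring care is the factorization $\HC_{n+1}\circ p_a=\sigma\circ(\HC_n\circ p_b,\res)$ as morphisms, not merely as set maps. Reducedness of $\Xcal$ handles this. Alternatively, one can verify $L^a\cdot C=(L^{\diff}+L^b)\cdot C$ on curves and invoke the definition of numerical equivalence. For a curve $C\subset\Xcal$, the pushforward cycle of $C$ to $S^{(n+1)}$ equals the pushforward via $\sigma$. Intersection numbers with $L^{(n+1)}$ are computed on $S^{n+1}$ after finite base change, so the numerical identity holds directly. I would present this curve-intersection version as the cleanest route.
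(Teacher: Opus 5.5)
Your proposal is correct and is essentially the paper's argument: factor $\HC_{n+1}\circ p_a$ through the addition map $S\times S^{(n)}\to S^{(n+1)}$, note that both candidate classes pull back to $\sum_i \pr_i^*L$ on $S^{n+1}$, and conclude by injectivity of pullback under the finite quotient map; your justification of the factorization via reducedness of $\Xcal$ is a detail the paper omits, and you correctly insert $\HC_n$ where the paper writes $(\res,p_b)$. One small wording slip is harmless: injectivity on $\Pic\otimes\QQ$ does not by itself imply injectivity on $N^1$, but you do not need that implication, since the identity in $\Pic(S^{(n)}\times S)\otimes\QQ$ already gives it in $N^1$, and injectivity on $N^1$ also follows directly from the projection formula.
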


\begin{proof}
Let $\alpha:S\times S^{(n)}\to S^{(n+1)}$ be the addition map.
Since
\[
\HC_{n+1}\circ p_a=\alpha\circ(\res,p_b),
\]
it suffices to prove
\[
\alpha^*L^{(n+1)}=p_1^*L+p_2^*L^{(n)}.
\]
Both sides pull back to
$\sum_{i=1}^{n+1}\pr_i^*L$
under the finite quotient morphism
$S^{n+1}\to S\times S^{(n)}$.
Since pullback on $N^1$ is injective for finite surjective morphisms,
this identity follows, and hence so does the lemma.
\end{proof}

Let $q:=(\res,p_b)\colon \Xcal\longrightarrow S\times S^{[n]}$.
By \cite[Proposition~2.2]{ellingsrud1998intersection}, the morphism $q$ is the blow-up along the universal family $\mathcal Z_n$.
Let $\mathcal E$ be the exceptional divisor. 
Then $B^{\diff}=2\mathcal E$.
Since $q(S)=0$, Fogarty's description of the Picard group \cite{Fogarty1973AlgebraicFO}, together with the Picard group decomposition for the blow-up, gives, for a basis $H_1,\dots,H_\rho$ of
$N^1(S)_\RR$,
\[
N^1(\Xcal)_\RR
=
\bigoplus_{i=1}^{\rho}\RR H_i^{\diff}
\oplus
\bigoplus_{i=1}^{\rho}\RR H_i^b
\oplus
\RR B^b
\oplus
\RR B^{\diff}.
\]

We use the curve classes of \cite[Sections~4.2 and~4.3]{RY18}. 
Let $\gamma\subset S$ be a reduced irreducible nodal curve and assume $n>\delta(\gamma)$. 
Choose distinct points $q_1,\dots,q_n\in\gamma$ such that $q_1,\dots,q_{n-1}$ contain all nodes of $\gamma$ and $q_n\in\gamma_{\mathrm{sm}}$, and define
\[
C_\gamma^a
:=
\{(q_1+\cdots+q_n\subset q_1+\cdots+q_n+r)\mid r\in\gamma\},
\]
\[
C_\gamma^b
:=
\{(q_1+\cdots+q_{n-1}+r\subset q_1+\cdots+q_{n-1}+r+q_n)\mid r\in\gamma\}.
\]

Fix distinct points $p,q_2,\dots,q_n\in S$. 
Let $A_a$ be obtained by fixing $Z=p+q_2+\cdots+q_n$ and varying $Z'=\xi+q_2+\cdots+q_n$, where $\xi$ is a length-two subscheme supported at $p$. 
Let $\eta=\Spec(\Ocal_{S,p}/\mathfrak m_p^2)$, and let $A_b$ be obtained by fixing $Z'=\eta+q_2+\cdots+q_{n-1}$ and varying $Z=\xi+q_2+\cdots+q_{n-1}\subset Z'$, where $\xi\subset\eta$ has length two.
Using Lemma~\ref{lem:nodal-boundary-intersection} for $p_a$ and $p_b$, we get
\[
\begin{array}{c|cccc}
 & H^{\diff} & H^b & B^{\diff} & B^b\\
\hline
C_\gamma^a & H\cdot\gamma & 0 & 2(n+\delta(\gamma)) & 0\\
C_\gamma^b & 0 & H\cdot\gamma & 2 & 2(n-1+\delta(\gamma))\\
A_a & 0 & 0 & -2 & 0\\
A_b & 0 & 0 & 2 & -2.
\end{array}
\]

\subsection{The universal family $S^{[1,n]}$}
\label{subsec:universal-background}
Set
\[
\Xcal:=S^{[1,n]}
=
\{(p,Z)\in S\times S^{[n]}\mid p\in\Supp(Z)\}.
\]
Thus $\Xcal$ is the universal family over $S^{[n]}$; for $n>2$, this universal family
has rational, non-$\QQ$-Gorenstein singularities \cite[Theorem~1.1]{SONG2016348}
and hence is non-$\QQ$-factorial, and we write
$N^1(\Xcal)_\RR$ for Cartier divisors modulo numerical equivalence.

By \cite[Proposition~2.4]{RY18}, $\Pic(\Xcal)$ is generated by pullbacks along
$\pr_{\diff}:\Xcal\to S$ and $\pr_a:\Xcal\to S^{[n]}$.
We also consider $\pr_b:\Xcal\to S^{(n-1)}$, where
$\pr_b(p,Z)=\HC_n(Z)-p$.
For $L\in\Pic(S)$, set
$L^{\diff}:=\pr_{\diff}^*L$, $L^b:=\pr_b^*L^{(n-1)}$,
$L^a:=\pr_a^*L^{[n]}$, and $B^a:=\pr_a^*B^{[n]}$.

\begin{lem}\label{lem:La=Ldiff+Lb-universal}
For every $L\in\Pic(S)$,
\[
L^a=L^{\diff}+L^b
\]
in $N^1(\Xcal)_\RR$.
\end{lem}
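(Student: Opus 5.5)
The plan is to mirror the proof of Lemma~\ref{lem:La=Ldiff+Lb-nested}, with the Hilbert--Chow morphism absorbing the nonreduced structure. Let $\alpha:S\times S^{(n-1)}\to S^{(n)}$ be the addition map. The first step is to check the identity of morphisms
\[
\HC_n\circ\pr_a=\alpha\circ(\pr_{\diff},\pr_b)\colon \Xcal\longrightarrow S^{(n)}.
\]
This holds pointwise, because $\pr_b(p,Z)=\HC_n(Z)-p$ is defined precisely so that $p+\pr_b(p,Z)=\HC_n(Z)$. Both sides are morphisms from the reduced variety $\Xcal$, which is integral since it is the universal family over the irreducible $S^{[n]}$, to a separated variety. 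Two such morphisms that agree on closed points agree, so the identity holds as morphisms. One may also take $\pr_b$ to be defined as a morphism through exactly this factorization, as in \cite{RY18}.

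Next, pulling back $L^{(n)}$ along both sides gives
\[
L^a=\pr_a^*\HC_n^*L^{(n)}=(\pr_{\diff},\pr_b)^*\alpha^*L^{(n)}.
\]
It therefore suffices to prove
\[
\alpha^*L^{(n)}=p_1^*L+p_2^*L^{(n-1)}
\]
in $N^1(S\times S^{(n-1)})_\RR$, or even in $\Pic$ after passing to a multiple. For this, pull back along the finite surjective quotient map $\pi:S\times S^{n-1}\to S\times S^{(n-1)}$. The composite $\alpha\circ\pi$ is the quotient $S^n\to S^{(n)}$, so both sides pull back to $\sum_{i=1}^n\pr_i^*L$. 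Since $\pi^*$ is injective on $N^1_\RR$ for a finite surjective morphism (by the projection formula, $\pi_*\pi^*=\deg\pi$ on intersection numbers with curves), the identity follows.

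Finally, applying $(\pr_{\diff},\pr_b)^*$ and using $p_1\circ(\pr_{\diff},\pr_b)=\pr_{\diff}$ and $p_2\circ(\pr_{\diff},\pr_b)=\pr_b$ gives $L^a=L^{\diff}+L^b$. The only mild obstacle is that $\Xcal$ is not $\QQ$-factorial, but every class involved is a pullback of a Cartier (or $\QQ$-Cartier, with $L^{(m)}$ Cartier up to multiple) class along a morphism. Pullback of Cartier divisors is well defined and compatible with numerical equivalence by the projection formula, so the computation stays inside $N^1(\Xcal)_\RR$ as defined in the paper.
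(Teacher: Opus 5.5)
Your proof is correct and follows the paper's argument: the paper says the proof is the same as for Lemma~\ref{lem:La=Ldiff+Lb-nested}, namely factor $\HC_n\circ\pr_a$ through the addition map $\alpha\colon S\times S^{(n-1)}\to S^{(n)}$, then check $\alpha^*L^{(n)}=p_1^*L+p_2^*L^{(n-1)}$ after pulling back along the finite quotient $S\times S^{n-1}\to S\times S^{(n-1)}$, where pullback on $N^1$ is injective. Your extra remarks on why the factorization holds as morphisms and why pullback respects numerical equivalence on the non-$\QQ$-factorial $\Xcal$ fill in points the paper leaves implicit.
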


\begin{proof}
The proof is the same as that of Lemma~\ref{lem:La=Ldiff+Lb-nested}.
\end{proof}

Since $q(S)=0$,
$N^1(S^{[n]})_\RR=N^1(S)_\RR\oplus\RR[B^{[n]}]$.
Thus, for a basis $H_1,\dots,H_\rho$ of $N^1(S)_\RR$, the classes $H_i^{\diff}$, $H_i^a$, and $B^a$ span $N^1(\Xcal)_\RR$.
By Lemma~\ref{lem:La=Ldiff+Lb-universal}, we may replace $H_i^a$ by $H_i^b$, and the intersection table below shows that these classes are
numerically independent. 
Hence
\[
N^1(\Xcal)_\RR
=
\bigoplus_{i=1}^{\rho}\RR H_i^{\diff}
\oplus
\bigoplus_{i=1}^{\rho}\RR H_i^b
\oplus
\RR B^a.
\]

We use the curve classes of \cite[Section~6.1]{RY18}.
For a nodal curve $\gamma\subset S$ with $n-1>\delta(\gamma)$, choose distinct points $q_1,\dots,q_{n-1}\in\gamma$ such that $q_1,\dots,q_{n-2}$ contain all nodes of $\gamma$ and $q_{n-1}\in\gamma_{\mathrm{sm}}$. 
Define
\[
C_\gamma^{\diff}
:=
\{(r,q_1+\cdots+q_{n-1}+r)\mid r\in\gamma\},
\]
\[
C_\gamma^a
:=
\{(q_{n-1},q_1+\cdots+q_{n-1}+r)\mid r\in\gamma\}.
\]
Fix a point $p\in S$ and distinct points $q_1,\dots,q_{n-2}$ away from $p$.
Let $A_a$ be obtained by varying the length-two subscheme $\xi$ supported at $p$ in $(p,\xi+q_1+\cdots+q_{n-2})$. 
Lemma~\ref{lem:nodal-boundary-intersection} gives
\[
\begin{array}{c|ccc}
 & H^{\diff} & H^b & B^a\\
\hline
C_\gamma^{\diff} & H\cdot\gamma & 0 & 2\bigl(n-1+\delta(\gamma)\bigr)\\
C_\gamma^a & 0 & H\cdot\gamma & 2\bigl(n-1+\delta(\gamma)\bigr)\\
A_a & 0 & 0 & -2.
\end{array}
\]

\section{The nested Hilbert scheme $S^{[n,n+1]}$}
\label{sec:nested}

\subsection{A nef cone decomposition}
\label{subsec:nested-decomposition}

Fix a basis $H_1,\dots,H_\rho$ of $N^1(S)_\RR$.
Every divisor class on $\Xcal=S^{[n,n+1]}$ admits a unique expression
\[
D=
\sum_{i=1}^{\rho}x_iH_i^{\diff}
+
\sum_{i=1}^{\rho}y_iH_i^b
+uB^{\diff}+vB^b.
\]
Set
\[
X:=\sum_{i=1}^{\rho}x_iH_i,
\qquad
Y:=\sum_{i=1}^{\rho}y_iH_i.
\]

\begin{prop}\label{prop:decomposition}
Let $\gamma_1,\dots,\gamma_k\subset S$ be rational nodal curves with $n>\delta(\gamma_i)$ for every $i$. 
Let $D=X^{\diff}+Y^b+uB^{\diff}+vB^b$
be nef on $\Xcal=S^{[n,n+1]}$. 
Assume
$\NEbar(S)=\Cone\langle\gamma_1,\dots,\gamma_k\rangle$
and
\[
\NEbar(S^{[m]})
=
\Cone\langle C_{\gamma_1}^{[m]},\dots,C_{\gamma_k}^{[m]},A^{[m]}\rangle
\]
for $m=n,n+1$. 
If there exists $M\in N^1(S)_\RR$ such that
\[
-2\bigl(n+\delta(\gamma_i)\bigr)u
\le
M\cdot\gamma_i
\le
\min\!\left(
X\cdot\gamma_i,
Y\cdot\gamma_i+2\bigl(n-1+\delta(\gamma_i)\bigr)(v-u)
\right)
\]
for every $i$, then
\[
D\in
\res^*\Nef(S)+p_b^*\Nef(S^{[n]})+p_a^*\Nef(S^{[n+1]}).
\]
\end{prop}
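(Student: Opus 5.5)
We write $D$ as an explicit sum of three pullbacks and check that each summand is nef, using only the intersection tables of Section~\ref{sec:background} and the assumed descriptions of $\NEbar(S)$ and $\NEbar(S^{[m]})$. First we express the three pullbacks in the basis $H_i^{\diff},H_i^b,B^{\diff},B^b$. By Lemma~\ref{lem:La=Ldiff+Lb-nested} and the definition $B^a=B^b+B^{\diff}$:
\[
\res^*P=P^{\diff},\qquad p_b^*\bigl(Q^{[n]}+\tfrac{c}{2}B^{[n]}\bigr)=Q^b+\tfrac{c}{2}B^b,
\]
\[
p_a^*\bigl(R^{[n+1]}+\tfrac{e}{2}B^{[n+1]}\bigr)=R^{\diff}+R^b+\tfrac{e}{2}B^{\diff}+\tfrac{e}{2}B^b .
\]
Matching coefficients with $D$ forces $e=2u$, $c=2(v-u)$, $P+R=X$ and $Q+R=Y$. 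We choose $R:=M$, the class supplied by the hypothesis, so $P=X-M$ and $Q=Y-M$.

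We then check nefness of the three pieces against the given generators.

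\begin{enumerate}
\item \textbf{The $S^{[n+1]}$ piece.} The class $M^{[n+1]}+uB^{[n+1]}$ must be nonnegative on $A^{[n+1]}$ and on each $C_{\gamma_i}^{[n+1]}$. By the table in \S\ref{subsec:hilbert-background}, this means $-2u\ge0$ and $M\cdot\gamma_i+2(n+\delta(\gamma_i))u\ge0$. The second inequality is the left-hand bound of the hypothesis.
\item \textbf{The $S$ piece.} The class $X-M$ must be nonnegative on each $\gamma_i$, which is $M\cdot\gamma_i\le X\cdot\gamma_i$.
\item \textbf{The $S^{[n]}$ piece.} The class $(Y-M)^{[n]}+(v-u)B^{[n]}$ must satisfy $-2(v-u)\ge0$ and $(Y-M)\cdot\gamma_i+2(n-1+\delta(\gamma_i))(v-u)\ge0$. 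The second inequality is the remaining part of the upper bound.
\end{enumerate}

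Since the generators span the respective Mori cones, these inequalities give nefness.

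The remaining inequalities $u\le0$ and $v\le u$ should come from the nefness of $D$. Using the table in \S\ref{subsec:nested-background}, $D\cdot A_a=-2u\ge0$ and $D\cdot A_b=2u-2v\ge0$.

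The only substantive point is this sign bookkeeping: one must check that the $A$-curve conditions on the two Hilbert-scheme factors are exactly the conditions $D\cdot A_a\ge0$ and $D\cdot A_b\ge0$, so that no extra hypothesis is needed. The hypothesis $n>\delta(\gamma_i)$ ensures that the curves $C_{\gamma_i}^{[m]}$ and their intersection numbers are defined for $m=n,n+1$.
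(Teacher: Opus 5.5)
Your proof is correct and is essentially the paper's argument: the same decomposition $D=\res^*(X-M)+p_b^*\bigl((Y-M)^{[n]}+(v-u)B^{[n]}\bigr)+p_a^*\bigl(M^{[n+1]}+uB^{[n+1]}\bigr)$ via Lemma~\ref{lem:La=Ldiff+Lb-nested}, with each piece checked against the assumed Mori cone generators. You also verify explicitly that $\bigl((Y-M)^{[n]}+(v-u)B^{[n]}\bigr)\cdot A^{[n]}=2u-2v=D\cdot A_b\ge0$, a check the paper's proof leaves implicit.
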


\begin{proof}
Set
\[
D_{\res}:=X-M,\qquad
D_a:=M^{[n+1]}+uB^{[n+1]},\qquad
D_b:=(Y-M)^{[n]}+(v-u)B^{[n]}.
\]
Then by Lemma~\ref{lem:La=Ldiff+Lb-nested},
$D=\res^*D_{\res}+p_a^*D_a+p_b^*D_b$.
The inequalities give
$D_{\res}\cdot\gamma_i\ge0$, $D_b\cdot C_{\gamma_i}^{[n]}\ge0$, and
$D_a\cdot C_{\gamma_i}^{[n+1]}\ge0$,
while
$D_a\cdot A^{[n+1]}=-2u=D\cdot A_a\ge0$.
Thus $D_{\res}$, $D_b$, and $D_a$ are nef.
\end{proof}
\begin{cor}\label{cor:decomposition}
Under the hypotheses of Proposition~\ref{prop:decomposition}, assume $k=\rho$.
Then
\[
\Nef(\Xcal)
=
\res^*\Nef(S)+p_b^*\Nef(S^{[n]})+p_a^*\Nef(S^{[n+1]}).
\]
\end{cor}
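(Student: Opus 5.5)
The inclusion ``$\supseteq$'' is formal: pullbacks of nef classes under morphisms are nef, and sums of nef classes are nef. For ``$\subseteq$'', take a nef $D=X^{\diff}+Y^b+uB^{\diff}+vB^b$ on $\Xcal$. By Proposition~\ref{prop:decomposition}, it suffices to produce $M\in N^1(S)_\RR$ satisfying the stated inequalities for every $i$. The role of $k=\rho$ is this: since $\NEbar(S)=\Cone\langle\gamma_1,\dots,\gamma_\rho\rangle$ is full-dimensional in the $\rho$-dimensional space $N_1(S)_\RR$, the classes $\gamma_1,\dots,\gamma_\rho$ form a basis. Hence the map $M\mapsto(M\cdot\gamma_1,\dots,M\cdot\gamma_\rho)$ is a linear isomorphism $N^1(S)_\RR\to\RR^\rho$, because the intersection pairing is perfect. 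So the values $M\cdot\gamma_i$ can be prescribed independently, and the problem reduces to checking, for each $i$ separately, that the lower bound does not exceed the upper bound.

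For fixed $i$, write $\delta=\delta(\gamma_i)$. We need the two inequalities
\[
-2(n+\delta)u\le X\cdot\gamma_i
\qquad\text{and}\qquad
-2(n+\delta)u\le Y\cdot\gamma_i+2(n-1+\delta)(v-u).
\]
I will read these off from nefness of $D$ against the curves of the intersection table in Section~\ref{subsec:nested-background}. Pairing $D$ with $C_{\gamma_i}^a$ gives
\[
D\cdot C_{\gamma_i}^a=X\cdot\gamma_i+2(n+\delta)u\ge0,
\]
which is exactly the first inequality. Pairing with $C_{\gamma_i}^b$ gives
\[
D\cdot C_{\gamma_i}^b=Y\cdot\gamma_i+2u+2(n-1+\delta)v\ge0.
\]
Rearranging the second inequality turns it into $Y\cdot\gamma_i+2(n-1+\delta)v+2u\ge0$, since $-2(n+\delta)u+2(n-1+\delta)u=-2u$. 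This is exactly $D\cdot C_{\gamma_i}^b\ge0$.

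Choose $M$ with $M\cdot\gamma_i=-2(n+\delta(\gamma_i))u$ for all $i$, which is possible by the basis observation above. Proposition~\ref{prop:decomposition} then places $D$ in the sum of the three pullback cones.

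The only point needing care is the existence of the curves $C_{\gamma_i}^a$ and $C_{\gamma_i}^b$. These require $n>\delta(\gamma_i)$ and the choice of points described earlier, both guaranteed by the hypotheses. There is no substantial obstacle beyond the linear-algebra step that $k=\rho$ makes the $\gamma_i$ a basis, so the constraints on $M$ decouple. For $k>\rho$ the constraints would couple, and the argument would fail.
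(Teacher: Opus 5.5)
Your proposal is correct and follows the paper's proof: you use $k=\rho$ to make $M\mapsto(M\cdot\gamma_1,\dots,M\cdot\gamma_\rho)$ an isomorphism, reduce to the nonemptiness of each interval separately, and identify the two endpoint comparisons with $D\cdot C_{\gamma_i}^a\ge0$ and $D\cdot C_{\gamma_i}^b\ge0$. Your explicit intersection-number computations and the choice of $M$ at the lower endpoint simply spell out steps the paper leaves implicit.
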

\begin{proof}
The classes $\gamma_1,\dots,\gamma_\rho$ form a basis of $N_1(S)_\RR$.
Hence the map
$N^1(S)_\RR\longrightarrow\RR^\rho$, $M\longmapsto
(M\cdot\gamma_1,\dots,M\cdot\gamma_\rho)$
is an isomorphism.
Thus the values $M\cdot\gamma_i$ can be prescribed independently.
The interval in Proposition~\ref{prop:decomposition} is nonempty exactly when
$D\cdot C_{\gamma_i}^a\ge0$ and $D\cdot C_{\gamma_i}^b\ge0$.
Since $D$ is nef, such an $M$ exists.
The reverse inclusion is immediate because pullbacks of nef divisors are nef.
\end{proof}
\begin{rmk}\label{rmk:dual-cone}
When $k=\rho$, the above argument is a numerical form of the dual-cone
computation used in \cite[Section~5.3]{RY18}. 
\end{rmk}

\begin{cor}\label{cor:nested-mori-by-duality}
Under the hypotheses of Corollary~\ref{cor:decomposition}, we have
\[
\NEbar(\Xcal)
=
\Cone\langle
C_{\gamma_1}^a,\dots,C_{\gamma_\rho}^a,
C_{\gamma_1}^b,\dots,C_{\gamma_\rho}^b,
A_a,A_b
\rangle.
\]
\end{cor}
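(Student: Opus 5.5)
Since $\NEbar(\Xcal)$ and $\Nef(\Xcal)$ are dual closed convex cones in the finite-dimensional spaces $N_1(\Xcal)_\RR$ and $N^1(\Xcal)_\RR$, it suffices to show that the cone
\[
\mathcal C:=\Cone\langle C_{\gamma_1}^a,\dots,C_{\gamma_\rho}^a,C_{\gamma_1}^b,\dots,C_{\gamma_\rho}^b,A_a,A_b\rangle
\]
has dual cone equal to $\Nef(\Xcal)$. The classes generating $\mathcal C$ are classes of actual irreducible curves on $\Xcal$, so $\mathcal C\subseteq\NEbar(\Xcal)$, and hence $\Nef(\Xcal)\subseteq\mathcal C^\vee$. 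By biduality of closed convex cones ($\mathcal C$ is finitely generated, hence closed), the statement follows once we prove the reverse inclusion $\mathcal C^\vee\subseteq\Nef(\Xcal)$. Then $\Nef(\Xcal)=\mathcal C^\vee$, and dualizing gives $\NEbar(\Xcal)=\mathcal C$.

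To show $\mathcal C^\vee\subseteq\Nef(\Xcal)$, take $D=X^{\diff}+Y^b+uB^{\diff}+vB^b$ with nonnegative intersection against all generators of $\mathcal C$. The key observation is that the proofs of Proposition~\ref{prop:decomposition} and Corollary~\ref{cor:decomposition} never use the nefness of $D$ beyond these finitely many inequalities. Indeed, from the intersection table:
\begin{itemize}
\item $D\cdot A_a=-2u\ge0$;
\item $D\cdot A_b=2u-2v\ge0$;
\item $D\cdot C_{\gamma_i}^a=X\cdot\gamma_i+2(n+\delta(\gamma_i))u\ge0$;
\item $D\cdot C_{\gamma_i}^b=Y\cdot\gamma_i+2u+2(n-1+\delta(\gamma_i))v\ge0$.
\end{itemize}

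Since $k=\rho$, the $\gamma_i$ form a basis of $N_1(S)_\RR$, so we may choose $M$ with prescribed values $M\cdot\gamma_i=-2(n+\delta(\gamma_i))u$, the lower end of the interval in Proposition~\ref{prop:decomposition}. The interval is then nonempty by the third inequality, together with
\[
Y\cdot\gamma_i+2(n-1+\delta(\gamma_i))(v-u)+2(n+\delta(\gamma_i))u
=Y\cdot\gamma_i+2u+2(n-1+\delta(\gamma_i))v\ge0,
\]
which is the fourth inequality.

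With this $M$, the construction in the proof of Proposition~\ref{prop:decomposition} writes $D=\res^*D_{\res}+p_a^*D_a+p_b^*D_b$. Nefness of the three summands is checked against the assumed generators of $\NEbar(S)$ and $\NEbar(S^{[m]})$:
\begin{itemize}
\item $D_{\res}$ and the $C_{\gamma_i}$-conditions for $D_a$ and $D_b$ follow from the choice of $M$;
\item $D_a\cdot A^{[n+1]}=-2u\ge0$ by the first inequality;
\item $D_b\cdot A^{[n]}=-2(v-u)\ge0$ by the second inequality.
\end{itemize}
This last check is the one point I would verify carefully, since the proof of Proposition~\ref{prop:decomposition} lists only the $A^{[n+1]}$ condition explicitly. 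Hence $D$ is a sum of pullbacks of nef divisors and is nef, which completes the argument.

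The main subtlety is exactly that bookkeeping: confirming that each hypothesis previously derived from nefness of $D$ (namely nonemptiness of the interval and the $A$-conditions) is implied by positivity on the listed curves alone, including the $A_b$ inequality, which is needed for $D_b$.
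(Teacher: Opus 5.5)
Your proof is correct. It runs the duality in the opposite direction from the paper.

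The paper takes Corollary~\ref{cor:decomposition} as input, so that $\Nef(\Xcal)^\vee$ is the set of classes whose pushforwards under $\res$, $p_b$, $p_a$ lie in the respective Mori cones. It then checks that the $2\rho+2$ listed classes form a basis of $N_1(\Xcal)_\RR$. It writes a general class as $\alpha=\sum a_iC^a_{\gamma_i}+\sum b_iC^b_{\gamma_i}+cA_a+dA_b$, computes its three pushforwards, and reads off $a_i,b_i,c,d\ge0$ from the simplicial Mori cones of $S$, $S^{[n]}$, $S^{[n+1]}$.

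You work on the divisor side instead. Set $P:=\res^*\Nef(S)+p_b^*\Nef(S^{[n]})+p_a^*\Nef(S^{[n+1]})$. You show that any $D$ nonnegative on the finitely many listed curves already admits the decomposition of Proposition~\ref{prop:decomposition} with $M$ at the lower endpoint. This gives $\mathcal C^\vee\subseteq P\subseteq\Nef(\Xcal)\subseteq\mathcal C^\vee$, and you dualize once.

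What your route buys:
\begin{itemize}
\item It avoids the basis check in $N_1(\Xcal)_\RR$ and the pushforward table (e.g.\ $(p_a)_*A_b=0$ and $(p_a)_*C^b_{\gamma_i}=C^{[n+1]}_{\gamma_i}$).
\item It recovers Corollary~\ref{cor:decomposition} at the same time rather than using it as input.
\item It makes explicit the check $D_b\cdot A^{[n]}=2(u-v)=D\cdot A_b\ge0$, which the paper's proof of Proposition~\ref{prop:decomposition} leaves unstated. Your verification of it is right.
\end{itemize}

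What the paper's coordinate computation buys is that it exhibits $\NEbar(\Xcal)$ directly as a simplicial cone. Hence each listed class spans an extremal ray, a fact your argument does not record.
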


\begin{proof}
Since $k=\rho$, the classes $\gamma_1,\dots,\gamma_\rho$ form a basis of $N_1(S)_\RR$.
The classes $C_{\gamma_1}^a,\dots,C_{\gamma_\rho}^a,
C_{\gamma_1}^b,\dots,C_{\gamma_\rho}^b,A_a,A_b$
form a basis of $N_1(\Xcal)_\RR$.
Their number is $2\rho+2=\dim N_1(\Xcal)_\RR$, and the intersection table shows that they are numerically independent.

Write
\[
\alpha
=
\sum_{i=1}^{\rho}a_iC_{\gamma_i}^a
+
\sum_{i=1}^{\rho}b_iC_{\gamma_i}^b
+
cA_a+dA_b.
\]
By Corollary~\ref{cor:decomposition}, $\alpha$ lies in the dual of
$\Nef(\Xcal)$ precisely when
\[
\res_*\alpha\in\NEbar(S),\qquad
(p_b)_*\alpha\in\NEbar(S^{[n]}),\qquad
(p_a)_*\alpha\in\NEbar(S^{[n+1]}).
\]
The pushforwards are
\[
\res_*\alpha=\sum_{i=1}^{\rho}a_i\gamma_i,\qquad
(p_b)_*\alpha=\sum_{i=1}^{\rho}b_iC_{\gamma_i}^{[n]}+dA^{[n]},
\]
and
\[
(p_a)_*\alpha
=
\sum_{i=1}^{\rho}(a_i+b_i)C_{\gamma_i}^{[n+1]}
+cA^{[n+1]}.
\]
Since the displayed classes generate the corresponding Mori cones by assumption, the first two conditions give
$a_i\ge0$, $b_i\ge0$, and $d\ge0$, while the third gives $c\ge0$.
Thus $\Nef(\Xcal)^\vee$ is the cone generated by the classes appearing in the statement.
By numerical duality, this is $\NEbar(\Xcal)$.
\end{proof}

\subsection{Examples}
\label{subsec:nested-examples}
\begin{exa}[The projective plane, \cite{RY18}]\label{ex:P2-nested}
For $S=\PP^2$ and $n\ge2$,
\[
\Nef(\Xcal)
=
\Cone\Bigl\langle
H^{\diff},
H^b,
H^b-\tfrac{1}{2(n-1)}B^b,
H^{\diff}+H^b-\tfrac{1}{2n}(B^{\diff}+B^b)
\Bigr\rangle.
\]
\end{exa}

\begin{exa}[Hirzebruch surfaces, \cite{RY18}]\label{ex:hirzebruch-nested}
Let $S=\mathbb F_e$, with minimal section $S_0$ and fiber $F$.
For $n\ge2$,
\[
\Nef(\Xcal)
=
\Cone\left\langle
\begin{aligned}
&F^{\diff},\ (S_0+eF)^{\diff},\ F^b,\ (S_0+eF)^b,\\
&(n-1)(S_0+(e+1)F)^b-\tfrac12 B^b,\\
&n(S_0+(e+1)F)^a-\tfrac12 B^a
\end{aligned}
\right\rangle.
\]
\end{exa}

\begin{exa}[A Picard rank one K3 surface, \cite{RY18}]\label{ex:K3}
Let $S$ be a general K3 surface with $\Pic(S)=\ZZ H$ and $H^2=2g-2$.
For $g\ge2$ and $n>g$, set
$D_m:=\frac{m-1+g}{2g-2}H^{[m]}-\frac12B^{[m]}$.
Then
\[
\Nef(\Xcal)
=
\Cone\langle H^{\diff},H^b,D_n^b,D_{n+1}^a\rangle.
\]
\end{exa}

\begin{exa}[Picard rank two Mori dream K3 surfaces, \cite{DER25}]
We consider Cases~I and III of \cite{DER25}.

\medskip

\noindent\emph{Case I.}
Assume
\[
\Pic(S)=\ZZ w_1\oplus\ZZ w_2,
\qquad
w_1^2=w_2^2=0,
\qquad
w_1\cdot w_2=k\ge2,
\]
where $w_1$ and $w_2$ are smooth elliptic curves.
Then
$\Nef(S)=\Cone\langle w_1,w_2\rangle$.
For $m\ge9k/8$, set
$D_m^{\mathrm I}:=\frac{m}{k}(w_1+w_2)^{[m]}-\frac12B^{[m]}$.
By \cite[Theorem~1.1]{DER25},
\[
\Nef(S^{[m]})
=
\Cone\left\langle
w_1^{[m]},w_2^{[m]},D_m^{\mathrm I}
\right\rangle.
\]
Here the classes $(w_i)_{[m]}$ are pencil curves rather than the moving-point curves $C_{w_i}^{[m]}$, so Corollary~\ref{cor:decomposition} does not apply directly.
The analogous dual-cone computation in \cite[Corollary~1.4]{DER25} gives
\[
\Nef(\Xcal)
=
\Cone\left\langle
w_1^{\diff},w_2^{\diff},
w_1^b,w_2^b,
(D_n^{\mathrm I})^b,(D_{n+1}^{\mathrm I})^a
\right\rangle
\]
for $n\ge9k/8$.

\medskip

\noindent\emph{Case III.}
Assume
\[
\Pic(S)=\ZZ w_1\oplus\ZZ w_2,
\qquad
w_1^2=w_2^2=-2,
\qquad
w_1\cdot w_2=k\ge3,
\]
where $w_1$ and $w_2$ are smooth rational curves.
Then $\NEbar(S)=\Cone\langle w_1,w_2\rangle$.
Assume
\[
n\ge k
\qquad\text{and}\qquad
8n^2+(2-9k)n+8\ge0.
\]
These inequalities also hold with $n+1$ in place of $n$.
By \cite[Theorem~1.1]{DER25}, for $m=n,n+1$,
\[
\NEbar(S^{[m]})
=
\Cone\left\langle
C_{w_1}^{[m]},C_{w_2}^{[m]},A^{[m]}
\right\rangle.
\]
Since $w_i\simeq\PP^1$, the pencil class $(w_i)_{[m]}$ of \cite{DER25} agrees numerically with $C_{w_i}^{[m]}$.

Set $H_1:=kw_1+2w_2$ and $H_2:=2w_1+kw_2$, and set
$D_m^{\mathrm{III}}
:=\frac{m-1}{k-2}(w_1+w_2)^{[m]}-\frac12B^{[m]}$.
Then
\[
\Nef(S^{[m]})
=
\Cone\left\langle
H_1^{[m]},H_2^{[m]},D_m^{\mathrm{III}}
\right\rangle.
\]
Since the hypotheses of Corollary~\ref{cor:decomposition} are satisfied,
we obtain
\[
\Nef(\Xcal)
=
\Cone\left\langle
H_1^{\diff},H_2^{\diff},
H_1^b,H_2^b,
(D_n^{\mathrm{III}})^b,(D_{n+1}^{\mathrm{III}})^a
\right\rangle.
\]
\end{exa}

\begin{cor}[Del Pezzo surfaces with $2\le K_S^2\le7$]\label{cor:delpezzo_nested}
Let $S$ be a del Pezzo surface with $2\le K_S^2\le7$ and let $n\ge2$.
Then
\[
\Nef(S^{[n,n+1]})
=
\res^*\Nef(S)+p_b^*\Nef(S^{[n]})+p_a^*\Nef(S^{[n+1]}).
\]
\end{cor}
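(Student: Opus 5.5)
The plan is to apply Proposition~\ref{prop:decomposition} directly rather than Corollary~\ref{cor:decomposition}. The corollary needs $k=\rho$, but for $K_S^2\le6$ the number of $(-1)$-curves exceeds $\rho(S)=10-K_S^2$. So I will exhibit the auxiliary class $M$ explicitly. The reverse inclusion is immediate, since pullbacks of nef classes are nef, so only the inclusion $\subseteq$ needs proof.

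\emph{Step 1: verify the hypotheses.} Take $\gamma_1,\dots,\gamma_k$ to be the $(-1)$-curves $E$ on $S$. Each is a smooth rational curve, so $\delta(E)=0<n$. For a del Pezzo surface with $2\le K_S^2\le7$, the cone $\NEbar(S)$ is generated by the $(-1)$-curves. This is standard: $S$ is a blow-up of $\PP^2$ in $9-K_S^2\ge2$ general points, and the cone theorem applies since $-K_S$ is ample. Lemma~\ref{lem:delpezzo-hilbert-mori}, applied with $m=n$ and $m=n+1$, gives $\NEbar(S^{[m]})=\Cone\langle\{C_E^{[m]}\}_E,A^{[m]}\rangle$. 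So all hypotheses of Proposition~\ref{prop:decomposition} hold.

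\emph{Step 2: extract inequalities from nefness.} Let $D=X^{\diff}+Y^b+uB^{\diff}+vB^b$ be nef. Pairing $D$ with the test curves in the intersection table (with $\delta=0$) gives four inequalities:
\[
D\cdot C_E^a=X\cdot E+2nu\ge0,\qquad D\cdot C_E^b=Y\cdot E+2u+2(n-1)v\ge0,
\]
\[
D\cdot A_a=-2u\ge0,\qquad D\cdot A_b=2u-2v\ge0 .
\]

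\emph{Step 3: choose $M$.} The key observation is that the lower bound $-2(n+\delta(E))u=-2nu$ in Proposition~\ref{prop:decomposition} is the same for every $(-1)$-curve. Since $K_S\cdot E=-1$ for all $E$, I set
\[
M:=2nu\,K_S,
\]
so that $M\cdot E=-2nu$ for every $E$. The lower inequality then holds with equality. The upper inequality $M\cdot E\le X\cdot E$ is exactly $D\cdot C_E^a\ge0$. The other upper inequality, $-2nu\le Y\cdot E+2(n-1)(v-u)$, rearranges to $Y\cdot E+2u+2(n-1)v\ge0$, which is exactly $D\cdot C_E^b\ge0$. Proposition~\ref{prop:decomposition} therefore places $D$ in the sum of the three pullback cones.

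The only real obstacle I anticipate is Step~1, namely confirming that the $(-1)$-curves generate $\NEbar(S)$ in the whole range $2\le K_S^2\le7$, which I will cite from the classical theory of del Pezzo surfaces. Once that is in place, the uniform choice $M=2nu\,K_S$ removes the need for any linear independence of the $\gamma_i$.
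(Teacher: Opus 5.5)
Your proposal is correct and matches the paper's proof: apply Proposition~\ref{prop:decomposition} with the $(-1)$-curves as the $\gamma_i$ and $M=2nu\,K_S=-2nu(-K_S)$. This choice attains the lower bound uniformly, and the two upper bounds become $D\cdot C_E^a\ge0$ and $D\cdot C_E^b\ge0$. You also state explicitly that $\NEbar(S)$ is generated by the $(-1)$-curves when $2\le K_S^2\le7$, a hypothesis the paper's proof uses without comment.
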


\begin{proof}
Let $E$ run through the $(-1)$-curves on $S$.
By Lemma~\ref{lem:delpezzo-hilbert-mori},
\[
\NEbar(S^{[m]})
=
\Cone\langle\{C_E^{[m]}\}_E,A^{[m]}\rangle.
\]
For a nef divisor $D$, choose $M=-2nu(-K_S)$.
Since $-K_S\cdot E=1$ for every $(-1)$-curve $E$,
$M\cdot E=-2nu$, which is the lower endpoint in Proposition~\ref{prop:decomposition}.
The remaining inequalities,
$M\cdot E\le X\cdot E$ and
$M\cdot E\le Y\cdot E+2(n-1)(v-u)$,
are equivalent to
$D\cdot C_E^a\ge0$ and $D\cdot C_E^b\ge0$,
so they hold since $D$ is nef.
Thus Proposition~\ref{prop:decomposition} shows that every nef divisor lies in the stated sum.
The opposite inclusion is immediate because pullbacks of nef divisors are nef.
\end{proof}

\section{The universal family $S^{[1,n]}$}
\label{sec:universal}

\subsection{A nef cone decomposition}
\label{subsec:universal-decomposition}

Fix a basis $H_1,\dots,H_\rho$ of $N^1(S)_\RR$.
Every divisor class on $\Xcal=S^{[1,n]}$ admits a unique expression
\[
D=
\sum_{i=1}^{\rho}x_iH_i^{\diff}
+
\sum_{i=1}^{\rho}y_iH_i^b
+uB^a.
\]
Set
\[
X:=\sum_{i=1}^{\rho}x_iH_i,
\qquad
Y:=\sum_{i=1}^{\rho}y_iH_i.
\]

The assumption
$\NEbar(S)=\Cone\langle\gamma_1,\dots,\gamma_k\rangle$
also gives
\[
\NEbar(S^{(n-1)})
=
\Cone\langle C_{\gamma_1}^{(n-1)},\dots,C_{\gamma_k}^{(n-1)}\rangle.
\]
This follows from the quotient morphism $S^{n-1}\to S^{(n-1)}$.

\begin{prop}\label{prop:universal-decomposition}
Let $\gamma_1,\dots,\gamma_k\subset S$ be rational nodal curves with $n-1>\delta(\gamma_i)$ for every $i$.
Let $D=X^{\diff}+Y^b+uB^a$ be nef on $\Xcal=S^{[1,n]}$.
Assume
$\NEbar(S)=\Cone\langle\gamma_1,\dots,\gamma_k\rangle$
and
\[
\NEbar(S^{[n]})
=
\Cone\langle C_{\gamma_1}^{[n]},\dots,C_{\gamma_k}^{[n]},A^{[n]}\rangle.
\]
If there exists $M\in N^1(S)_\RR$ such that
\[
-2\bigl(n-1+\delta(\gamma_i)\bigr)u
\le
M\cdot\gamma_i
\le
\min\bigl(X\cdot\gamma_i,Y\cdot\gamma_i\bigr)
\]
for every $i$, then
\[
D\in
\pr_{\diff}^*\Nef(S)
+\pr_b^*\Nef(S^{(n-1)})
+\pr_a^*\Nef(S^{[n]}).
\]
\end{prop}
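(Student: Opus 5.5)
Mirroring the proof of Proposition~\ref{prop:decomposition}, I will split $D$ explicitly into three pullbacks and check nefness of each piece against the generators of the relevant Mori cones. Set
\[
D_{\diff}:=X-M,\qquad
D_b:=(Y-M)^{(n-1)},\qquad
D_a:=M^{[n]}+uB^{[n]}.
\]
By Lemma~\ref{lem:La=Ldiff+Lb-universal} we have $M^a=M^{\diff}+M^b$. Hence
\[
\pr_{\diff}^*D_{\diff}+\pr_b^*D_b+\pr_a^*D_a
=(X-M)^{\diff}+(Y-M)^b+M^{\diff}+M^b+uB^a=D
\]
in $N^1(\Xcal)_\RR$. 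Here $\pr_a^*B^{[n]}=B^a$ by definition.

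It remains to check that each piece is nef, using the Mori cone descriptions.

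\emph{The piece $D_{\diff}$.} Since $\NEbar(S)=\Cone\langle\gamma_1,\dots,\gamma_k\rangle$, it suffices to have $(X-M)\cdot\gamma_i\ge0$. This is the upper bound $M\cdot\gamma_i\le X\cdot\gamma_i$.

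\emph{The piece $D_b$.} We use the stated description $\NEbar(S^{(n-1)})=\Cone\langle C_{\gamma_i}^{(n-1)}\rangle$ together with $L^{(n-1)}\cdot C_\gamma^{(n-1)}=L\cdot\gamma$. Then $D_b\cdot C_{\gamma_i}^{(n-1)}=(Y-M)\cdot\gamma_i\ge0$ by the bound $M\cdot\gamma_i\le Y\cdot\gamma_i$.

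\emph{The piece $D_a$.} The intersection table for $S^{[n]}$ gives
\[
D_a\cdot C_{\gamma_i}^{[n]}=M\cdot\gamma_i+2\bigl(n-1+\delta(\gamma_i)\bigr)u,
\]
which is nonnegative by the lower bound. We also have $D_a\cdot A^{[n]}=-2u$. From the table on $\Xcal$, $D\cdot A_a=-2u$, and this is $\ge0$ because $D$ is nef. Since $\NEbar(S^{[n]})$ is generated by the $C_{\gamma_i}^{[n]}$ and $A^{[n]}$, $D_a$ is nef.

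The only steps needing care are two bookkeeping identifications. The first is that $\pr_a^*B^{[n]}=B^a$ and the identity $M^a=M^{\diff}+M^b$ holds in $N^1(\Xcal)_\RR$, where we work with Cartier classes modulo numerical equivalence since $\Xcal$ is not $\QQ$-factorial; this is supplied by Lemma~\ref{lem:La=Ldiff+Lb-universal}. The second is the hypothesis $n-1>\delta(\gamma_i)$. It guarantees that the curves $C_{\gamma_i}^{[n]}$ and $C_{\gamma_i}^{(n-1)}$ are defined, and that the formula from Lemma~\ref{lem:nodal-boundary-intersection} applies. I expect the nonnegativity $-2u\ge0$, extracted from nefness of $D$ via the curve $A_a$, to be the one point that genuinely uses the hypothesis that $D$ is nef. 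Everything else follows formally from the assumed inequalities on $M$.
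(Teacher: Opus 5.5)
Your proof is correct and is the paper's argument. It uses the same splitting $D_{\diff}=X-M$, $D_b=(Y-M)^{(n-1)}$, $D_a=M^{[n]}+uB^{[n]}$ via Lemma~\ref{lem:La=Ldiff+Lb-universal}, checks nefness on the Mori cone generators, and obtains $D_a\cdot A^{[n]}=-2u=D\cdot A_a\ge0$ from the nefness of $D$.
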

\begin{proof}
Set 
\[
D_{\diff}:=X-M,\qquad
D_b:=(Y-M)^{(n-1)},\qquad
D_a:=M^{[n]}+uB^{[n]}.
\]
By Lemma~\ref{lem:La=Ldiff+Lb-universal},
$D=\pr_{\diff}^*D_{\diff}+\pr_b^*D_b+\pr_a^*D_a$.
The assumed inequalities show that $D_{\diff}$, $D_b$, and $D_a$ have nonnegative intersection with the non-punctual generators of their respective Mori cones.
By the intersection table and the nefness of $D$,
$D_a\cdot A^{[n]}=-2u=D\cdot A_a\ge0$.
Since these classes generate the corresponding Mori cones, $D_{\diff}$, $D_b$, and $D_a$ are nef.
\end{proof}

\begin{cor}\label{cor:universal-nef-decomposition}
Under the hypotheses of Proposition~\ref{prop:universal-decomposition}, assume $k=\rho$. 
Then
\[
\Nef(\Xcal)
=
\pr_{\diff}^*\Nef(S)
+\pr_b^*\Nef(S^{(n-1)})
+\pr_a^*\Nef(S^{[n]}).
\]
\end{cor}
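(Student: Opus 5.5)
The argument follows the proof of Corollary~\ref{cor:decomposition}, with Proposition~\ref{prop:universal-decomposition} in place of Proposition~\ref{prop:decomposition}. The reverse inclusion is formal: $\pr_{\diff}$, $\pr_b$, $\pr_a$ are morphisms, so pullbacks of nef classes are nef, and a sum of nef classes is nef. It remains to show that every nef $D=X^{\diff}+Y^b+uB^a$ on $\Xcal=S^{[1,n]}$ lies in the sum of the three pullback cones. By Proposition~\ref{prop:universal-decomposition}, it suffices to produce $M\in N^1(S)_\RR$ satisfying the displayed interval inequalities for every $i$.

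Since $k=\rho$ and $\gamma_1,\dots,\gamma_\rho$ span $\NEbar(S)$, which is full-dimensional, these classes form a basis of $N_1(S)_\RR$. Hence the map
\[
M\longmapsto(M\cdot\gamma_1,\dots,M\cdot\gamma_\rho)
\]
is an isomorphism $N^1(S)_\RR\to\RR^\rho$. We may therefore prescribe the numbers $M\cdot\gamma_i$ independently, one coordinate at a time. So we only need each interval
\[
\bigl[-2(n-1+\delta(\gamma_i))u,\ \min(X\cdot\gamma_i,Y\cdot\gamma_i)\bigr]
\]
to be nonempty.

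We read the endpoints off the intersection table of Section~\ref{subsec:universal-background}:
\[
D\cdot C_{\gamma_i}^{\diff}=X\cdot\gamma_i+2(n-1+\delta(\gamma_i))u,
\qquad
D\cdot C_{\gamma_i}^{a}=Y\cdot\gamma_i+2(n-1+\delta(\gamma_i))u.
\]
These curves exist because $n-1>\delta(\gamma_i)$. Since $D$ is nef, both quantities are nonnegative. This gives exactly
\[
-2(n-1+\delta(\gamma_i))u\le X\cdot\gamma_i
\quad\text{and}\quad
-2(n-1+\delta(\gamma_i))u\le Y\cdot\gamma_i,
\]
so each interval is nonempty. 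We then take $M$ with $M\cdot\gamma_i$ equal to, say, the lower endpoint, and Proposition~\ref{prop:universal-decomposition} places $D$ in the sum.

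The only point needing care is that Proposition~\ref{prop:universal-decomposition} also uses $D\cdot A_a=-2u\ge 0$ for the punctual generator; this again follows from nefness of $D$, and the proposition already handles it. There is no real obstacle beyond checking that the intersection numbers above match the table.
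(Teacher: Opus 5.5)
Your proposal is correct and is the paper's argument. The paper's proof only says that the proof of Corollary~\ref{cor:decomposition} carries over: $\gamma_1,\dots,\gamma_\rho$ is a basis of $N_1(S)_\RR$, so each $M\cdot\gamma_i$ can be prescribed independently, and each interval is nonempty exactly when $D\cdot C_{\gamma_i}^{\diff}\ge0$ and $D\cdot C_{\gamma_i}^{a}\ge0$. You carry out those steps explicitly from the intersection table, and you also give the formal reverse inclusion, which the paper likewise treats as immediate.
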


\begin{proof}
The argument of Corollary~\ref{cor:decomposition} applies.
\end{proof}

\begin{cor}\label{cor:universal-mori-by-duality}
Under the hypotheses of Corollary~\ref{cor:universal-nef-decomposition}, we have
\[
\NEbar(\Xcal)
=
\Cone\langle
C_{\gamma_1}^{\diff},\dots,C_{\gamma_\rho}^{\diff},
C_{\gamma_1}^a,\dots,C_{\gamma_\rho}^a,
A_a
\rangle.
\]
\end{cor}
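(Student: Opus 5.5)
We argue exactly as in Corollary~\ref{cor:nested-mori-by-duality}, replacing the nested data by the data of Subsection~\ref{subsec:universal-background}. First we check that the $2\rho+1$ classes
$C_{\gamma_1}^{\diff},\dots,C_{\gamma_\rho}^{\diff},C_{\gamma_1}^a,\dots,C_{\gamma_\rho}^a,A_a$
form a basis of $N_1(\Xcal)_\RR$. Since $k=\rho$, the $\gamma_i$ form a basis of $N_1(S)_\RR$. Consider the intersection table of Subsection~\ref{subsec:universal-background} against the basis $H_i^{\diff},H_i^b,B^a$. Its matrix is block triangular, with diagonal blocks $(H_j\cdot\gamma_i)$, $(H_j\cdot\gamma_i)$ and $-2$, all invertible. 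Hence these classes are numerically independent. Their number equals $\dim N^1(\Xcal)_\RR=2\rho+1$, so they form a basis.

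Next we write an arbitrary class as
\[
\alpha=\sum_i a_iC_{\gamma_i}^{\diff}+\sum_i b_iC_{\gamma_i}^a+cA_a .
\]
By Corollary~\ref{cor:universal-nef-decomposition}, $\alpha\in\Nef(\Xcal)^\vee$ if and only if three conditions hold:
\[
(\pr_{\diff})_*\alpha\in\NEbar(S),\qquad
(\pr_b)_*\alpha\in\NEbar(S^{(n-1)}),\qquad
(\pr_a)_*\alpha\in\NEbar(S^{[n]}).
\]
This uses the projection formula together with the fact that the three pullback cones are dual to these pushforward conditions.

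We then compute the pushforwards of the generators.
\begin{enumerate}
\item The curve $C_\gamma^{\diff}$ maps isomorphically onto $\gamma$ under $\pr_{\diff}$. Under $\pr_b$ it maps to the point $q_1+\dots+q_{n-1}$. Under $\pr_a$ it maps to $C_\gamma^{[n]}$.
\item The curve $C_\gamma^a$ maps to a point under $\pr_{\diff}$. Under $\pr_b$ it maps to $q_1+\dots+q_{n-2}+r$, which is the class $C_\gamma^{(n-1)}$. Under $\pr_a$ it maps to $C_\gamma^{[n]}$.
\item The curve $A_a$ is contracted by $\pr_{\diff}$ and by $\pr_b$, and maps to $A^{[n]}$ under $\pr_a$.
\end{enumerate}
These identifications are forced numerically by the intersection table: for example, $B^{[n]}\cdot C^{[n]}_\gamma=2(n-1+\delta)$ agrees with the $B^a$ column, and $L^{[n]}=L^{\diff}+L^b$ by Lemma~\ref{lem:La=Ldiff+Lb-universal}. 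Therefore
\[
(\pr_{\diff})_*\alpha=\sum_i a_i\gamma_i,\qquad
(\pr_b)_*\alpha=\sum_i b_iC_{\gamma_i}^{(n-1)},\qquad
(\pr_a)_*\alpha=\sum_i(a_i+b_i)C_{\gamma_i}^{[n]}+cA^{[n]} .
\]

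Finally we read off the three membership conditions. Since the $\gamma_i$ form a basis generating $\NEbar(S)$, the first condition gives $a_i\ge0$. The classes $C_{\gamma_i}^{(n-1)}$ are independent, because $L^{(n-1)}\cdot C^{(n-1)}_{\gamma_i}=L\cdot\gamma_i$, and they generate $\NEbar(S^{(n-1)})$; so the second condition gives $b_i\ge0$. For the third, the $\rho+1$ generators $C_{\gamma_i}^{[n]},A^{[n]}$ form a basis of $N_1(S^{[n]})_\RR$, so membership in their cone is equivalent to nonnegativity of all coefficients; in particular it gives $c\ge0$. Conversely, if $a_i,b_i,c\ge0$, all three conditions hold. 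Hence $\Nef(\Xcal)^\vee$ is exactly the stated cone, and by duality it equals $\NEbar(\Xcal)$.

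The main obstacle is the duality step when $\Xcal$ is not $\QQ$-factorial for $n>2$. We must make sure that $N^1(\Xcal)_\RR$ (Cartier classes modulo numerical equivalence) and $N_1(\Xcal)_\RR$ are perfectly dual, so that $\NEbar(\Xcal)=\Nef(\Xcal)^\vee$. This is standard for Cartier divisors modulo numerical equivalence on a projective variety. The basis computation above confirms that the dimensions agree.
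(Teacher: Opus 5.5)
Your proposal is correct and follows the paper's route. You show that the $2\rho+1$ classes form a basis via the block-triangular intersection table. You then use Corollary~\ref{cor:universal-nef-decomposition} to turn membership in $\Nef(\Xcal)^\vee$ into the three pushforward conditions, and read off $a_i,b_i,c\ge0$ from simplicial Mori cones; your added checks, namely the independence of the $C_{\gamma_i}^{(n-1)}$ and of $C_{\gamma_i}^{[n]},A^{[n]}$, only make explicit what the paper leaves implicit. One small imprecision: for nodal $\gamma$ the map $\pr_{\diff}$ sends $C_\gamma^{\diff}$ birationally, not isomorphically, onto $\gamma$, but this does not change the pushforward class.
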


\begin{proof}
Since $k=\rho$, the classes $\gamma_1,\dots,\gamma_\rho$ form a basis of $N_1(S)_\RR$.
By the intersection table, the classes
$C_{\gamma_1}^{\diff},\dots,C_{\gamma_\rho}^{\diff},
C_{\gamma_1}^a,\dots,C_{\gamma_\rho}^a,A_a$
are numerically independent.
Since $\dim N_1(\Xcal)_\RR=2\rho+1$, they form a basis of $N_1(\Xcal)_\RR$.

Write
\[
\alpha
=
\sum_{i=1}^{\rho}a_iC_{\gamma_i}^{\diff}
+
\sum_{i=1}^{\rho}b_iC_{\gamma_i}^a
+
cA_a.
\]
By Corollary~\ref{cor:universal-nef-decomposition}, $\alpha$ lies in $\Nef(\Xcal)^\vee$ precisely when
\[
(\pr_{\diff})_*\alpha\in\NEbar(S),
\qquad
(\pr_b)_*\alpha\in\NEbar(S^{(n-1)}),
\qquad
(\pr_a)_*\alpha\in\NEbar(S^{[n]}).
\]
The pushforwards are
\[
(\pr_{\diff})_*\alpha=\sum_i a_i\gamma_i,\qquad
(\pr_b)_*\alpha=\sum_i b_iC_{\gamma_i}^{(n-1)},
\]
and
\[
(\pr_a)_*\alpha
=
\sum_i(a_i+b_i)C_{\gamma_i}^{[n]}+cA^{[n]}.
\]
Hence the first two conditions give $a_i,b_i\ge0$, and the third gives $c\ge0$.
Thus $\NEbar(\Xcal)$ is generated by
$C_{\gamma_i}^{\diff}$, $C_{\gamma_i}^a$, and $A_a$.
\end{proof}

\subsection{Examples}
\label{subsec:universal-examples}

\begin{exa}[The projective plane, \cite{RY18}]\label{ex:P2-univ}
For $S=\PP^2$,
\[
\Nef(\Xcal)
=
\Cone\Bigl\langle
H^{\diff},
H^b,
H^{\diff}+H^b-\tfrac{1}{2(n-1)}B^a
\Bigr\rangle.
\]
\end{exa}

\begin{exa}[Hirzebruch surfaces, \cite{RY18}]\label{ex:hirzebruch-univ}
For $S=\mathbb F_e$,
\[
\Nef(\Xcal)
=
\Cone\left\langle
\begin{aligned}
&F^{\diff},\ (S_0+eF)^{\diff},\ F^b,\ (S_0+eF)^b,\\
&(n-1)(S_0+(e+1)F)^a-\tfrac12 B^a
\end{aligned}
\right\rangle.
\]
\end{exa}

\begin{exa}[A Picard rank one K3 surface, \cite{RY18}]
Let $S$ be a general K3 surface with $\Pic(S)=\ZZ H$ and $H^2=2g-2$.
For $g\ge2$ and $n>g$, set
$D_n:=\frac{n-1+g}{2g-2}H^{[n]}-\frac12B^{[n]}$.
Then
\[
\Nef(\Xcal)=\Cone\langle H^{\diff},H^b,D_n^a\rangle.
\]
\end{exa}

\begin{cor}[Del Pezzo surfaces with $2\le K_S^2\le7$]\label{cor:delpezzo_univ}
Let $S$ be a del Pezzo surface with $2\le K_S^2\le7$ and let $n\ge2$.
Then
\[
\Nef(S^{[1,n]})
=
\pr_{\diff}^*\Nef(S)
+\pr_b^*\Nef(S^{(n-1)})
+\pr_a^*\Nef(S^{[n]}).
\]
\end{cor}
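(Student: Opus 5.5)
The plan mirrors the proof of Corollary~\ref{cor:delpezzo_nested}, using Proposition~\ref{prop:universal-decomposition} in place of Proposition~\ref{prop:decomposition}. Take the curves $\gamma_i$ to be the $(-1)$-curves $E$ on $S$. Each is a smooth rational curve, so $\delta(E)=0$ and the condition $n-1>\delta(E)$ holds for every $n\ge2$. For a del Pezzo surface with $2\le K_S^2\le 7$, the Mori cone $\NEbar(S)$ is generated by the $(-1)$-curves. For $n\ge2$, Lemma~\ref{lem:delpezzo-hilbert-mori} with $m=n$ gives $\NEbar(S^{[n]})=\Cone\langle\{C_E^{[n]}\}_E,A^{[n]}\rangle$. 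So all hypotheses of Proposition~\ref{prop:universal-decomposition} hold.

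Now let $D=X^{\diff}+Y^b+uB^a$ be nef. Choose $M:=-2(n-1)u\,(-K_S)$. Since $-K_S\cdot E=1$, we get $M\cdot E=-2(n-1)u$, which is exactly the lower endpoint of the required interval. It remains to check $M\cdot E\le X\cdot E$ and $M\cdot E\le Y\cdot E$. By the intersection table of Section~\ref{subsec:universal-background} with $\delta(E)=0$,
\[
D\cdot C_E^{\diff}=X\cdot E+2(n-1)u,\qquad D\cdot C_E^a=Y\cdot E+2(n-1)u .
\]
So the two inequalities are precisely $D\cdot C_E^{\diff}\ge0$ and $D\cdot C_E^a\ge0$, and both hold because $D$ is nef. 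Proposition~\ref{prop:universal-decomposition} then places $D$ in the sum of pullback cones. The reverse inclusion holds because pullbacks of nef classes are nef.

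The only subtle point is the case $n=2$. There $C_E^{\diff}$ and $C_E^a$ require $n-1=1>\delta(E)=0$, which holds, and $S^{(1)}=S$. The statement $\NEbar(S^{(n-1)})=\Cone\langle C_E^{(n-1)}\rangle$ comes from the quotient $S^{n-1}\to S^{(n-1)}$, as noted before Proposition~\ref{prop:universal-decomposition}. There is also a notational point: $\NEbar(S)$ is generated by the $(-1)$-curves only for $K_S^2\le7$, excluding $\PP^1\times\PP^1$ in degree $8$. Since the statement assumes $K_S^2\le7$, the surface is a blow-up of $\PP^2$ in at least two points, and this fact is standard.

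The main obstacle is already absorbed by Lemma~\ref{lem:delpezzo-hilbert-mori}. The key trick is that $-K_S$ has intersection $1$ with every $(-1)$-curve, so a single $M$ meets all lower bounds simultaneously even though the number of curves $k$ exceeds $\rho$. This is why Corollary~\ref{cor:universal-nef-decomposition}, which needs $k=\rho$, cannot be used, and Proposition~\ref{prop:universal-decomposition} is invoked directly.
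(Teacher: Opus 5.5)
Your proposal is correct and is essentially the paper's proof: the same choice $M=-2(n-1)u(-K_S)$ attains the lower bounds of Proposition~\ref{prop:universal-decomposition}, and the upper bounds reduce to $D\cdot C_E^{\diff}\ge0$ and $D\cdot C_E^a\ge0$, which hold because $D$ is nef. One harmless slip in your closing remark: for $K_S^2=7$ there are exactly three $(-1)$-curves and $\rho=3$, so $k=\rho$ there and Corollary~\ref{cor:universal-nef-decomposition} would also apply; $k>\rho$ holds only when $K_S^2\le6$.
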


\begin{proof}
Let $E$ run through the $(-1)$-curves on $S$.
By Lemma~\ref{lem:delpezzo-hilbert-mori},
\[
\NEbar(S^{[n]})
=
\Cone\langle\{C_E^{[n]}\}_E,A^{[n]}\rangle.
\]
For a nef divisor $D=X^{\diff}+Y^b+uB^a$, take
$M=-2(n-1)u(-K_S)$.
Since $-K_S\cdot E=1$ for every $(-1)$-curve $E$, the lower bound in
Proposition~\ref{prop:universal-decomposition} is attained.
The upper bounds are equivalent to
$D\cdot C_E^{\diff}\ge0$ and $D\cdot C_E^a\ge0$,
and hence hold because $D$ is nef.
Thus Proposition~\ref{prop:universal-decomposition} shows that every nef divisor lies in the stated sum.
The opposite inclusion is immediate because pullbacks of nef divisors are nef.
\end{proof}

\section{The degree-one del Pezzo case}
\label{sec:degree-one}

Let $S$ be a del Pezzo surface of degree $1$, and set $F:=-K_S$.
For $m\ge2$, let $F_{[m]}$ denote the curve class obtained by letting $m$ points move in a $g^1_m$ on a smooth member of $|F|$.
Its intersections with the standard divisor classes are
\[
L^{[m]}\cdot F_{[m]}=L\cdot F,
\qquad
B^{[m]}\cdot F_{[m]}=2m.
\]
For a $(-1)$-curve $E\subset S$, the moving-point class $C_E^{[m]}$ defined above agrees numerically with the class of a line in $E^{[m]}\simeq\PP^m$, so $B^{[m]}\cdot C_E^{[m]}=2m-2$.
The Mori cone of $S^{[m]}$ is
\[
\NEbar(S^{[m]})
=
\Cone\left\langle
\{C_E^{[m]}\}_E,\ F_{[m]},\ A^{[m]}
\right\rangle,
\]
where $E$ runs through the $(-1)$-curves on $S$ \cite[Theorem~5.1]{BHL+16}; see also the correction \cite{BHL+16erratum}.

Fix a $(-1)$-curve $E$ and set $D:=2F-E$. 
By \cite[Section~5.1]{BHL+16}, $D$ is again a $(-1)$-curve; hence $\delta(D)=0$, $E+D=2F$, and $E\cdot D=3$.

\begin{lem}
\label{lem:degree-one-hilbert-relation}
For every $m\ge2$,
\[
C_E^{[m]}+C_D^{[m]}
=
2F_{[m]}+2A^{[m]}
\]
in $N_1(S^{[m]})_\RR$.
\end{lem}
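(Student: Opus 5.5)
Since $q(S)=0$, we have $N^1(S^{[m]})_\RR=N^1(S)_\RR\oplus\RR B^{[m]}$. By numerical duality, two classes in $N_1(S^{[m]})_\RR$ agree exactly when they have the same intersection numbers with every $L^{[m]}$, $L\in\Pic(S)$, and with $B^{[m]}$. The plan is therefore to compute both sides of the claimed relation against these generators, using the intersection table of Section~\ref{sec:background} together with the degree-one formulas for $F_{[m]}$ stated above.

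\emph{Step 1: pairing with $L^{[m]}$.} By the intersection table, $L^{[m]}\cdot C_E^{[m]}=L\cdot E$ and $L^{[m]}\cdot C_D^{[m]}=L\cdot D$. Hence the left side gives
\[
L\cdot(E+D)=L\cdot 2F.
\]
On the right, $L^{[m]}\cdot F_{[m]}=L\cdot F$ and $L^{[m]}\cdot A^{[m]}=0$, which give $2L\cdot F$. The two sides agree because $E+D=2F$ by the choice $D=2F-E$.

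\emph{Step 2: pairing with $B^{[m]}$.} Both $E$ and $D$ are smooth rational $(-1)$-curves, so $\delta=0$. Lemma~\ref{lem:nodal-boundary-intersection} (equivalently, the stated value $2m-2$ for a line in $E^{[m]}\simeq\PP^m$) gives
\[
B^{[m]}\cdot\bigl(C_E^{[m]}+C_D^{[m]}\bigr)=4(m-1).
\]
On the right, $B^{[m]}\cdot F_{[m]}=2m$ and $B^{[m]}\cdot A^{[m]}=-2$, which give $2\cdot 2m+2\cdot(-2)=4m-4$. The two sides agree.

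Since the generators $L^{[m]}$ and $B^{[m]}$ span $N^1(S^{[m]})_\RR$, these two computations prove the relation. The only point needing care, and it is minor, is confirming the intersection data being used. One must check that $B^{[m]}\cdot F_{[m]}=2m$, consistent with the general pencil formula $2(m-1+g(w))$ when $g(F)=1$, and that $D$ is genuinely a $(-1)$-curve, so that $C_D^{[m]}$ is defined with $\delta(D)=0$. Both are supplied in the text preceding the lemma.
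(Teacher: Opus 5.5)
Your proof is correct and follows the paper's argument: pair both sides with $L^{[m]}$ and $B^{[m]}$, using $E+D=2F$ and the values $B^{[m]}\cdot C_E^{[m]}=B^{[m]}\cdot C_D^{[m]}=2m-2$, $B^{[m]}\cdot F_{[m]}=2m$, $B^{[m]}\cdot A^{[m]}=-2$. Then conclude by the perfect numerical pairing, since these divisor classes span $N^1(S^{[m]})_\RR$.
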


\begin{proof}
For every $L\in N^1(S)_\RR$,
\[
L^{[m]}\cdot\bigl(C_E^{[m]}+C_D^{[m]}\bigr)
=2L\cdot F
=L^{[m]}\cdot\bigl(2F_{[m]}+2A^{[m]}\bigr).
\]
Moreover,
\[
B^{[m]}\cdot\bigl(C_E^{[m]}+C_D^{[m]}\bigr)
=4m-4
=B^{[m]}\cdot\bigl(2F_{[m]}+2A^{[m]}\bigr).
\]
Here we use
$B^{[m]}\cdot C_E^{[m]}=B^{[m]}\cdot C_D^{[m]}=2m-2$,
$B^{[m]}\cdot F_{[m]}=2m$, and $B^{[m]}\cdot A^{[m]}=-2$.
Since $L^{[m]}$ together with $B^{[m]}$ span $N^1(S^{[m]})_\RR$,
the claim follows from the perfect numerical pairing.
\end{proof}

Set $\Xcal:=S^{[n,n+1]}$ and
\[
P:=\res^*\Nef(S)+p_b^*\Nef(S^{[n]})+p_a^*\Nef(S^{[n+1]}).
\]
By the projection formula and numerical duality,
\[
P^\vee
=
\res_*^{-1}\bigl(\NEbar(S)\bigr)
\cap
(p_b)_*^{-1}\bigl(\NEbar(S^{[n]})\bigr)
\cap
(p_a)_*^{-1}\bigl(\NEbar(S^{[n+1]})\bigr)
=:K.
\]
Since $P\subseteq\Nef(\Xcal)$, we have $\NEbar(\Xcal)\subseteq K$.
It suffices to exhibit a class in $K\setminus\NEbar(\Xcal)$.

Recall that $q=(\res,p_b)$ is the blow-up of $S\times S^{[n]}$ along the universal family $\mathcal Z_n$.
Let $\mathcal E$ denote the exceptional divisor. Then
$B^{\diff}=p_a^*B^{[n+1]}-p_b^*B^{[n]}=2\mathcal E$.

Let $C_E^a$ and $C_D^b$ be the nested curve classes defined in Subsection~\ref{subsec:nested-background}.
Their pushforwards are
\[
\begin{array}{c|ccc}
& \res_* & (p_b)_* & (p_a)_*\\
\hline
C_E^a & E & 0 & C_E^{[n+1]}\\
C_D^b & 0 & C_D^{[n]} & C_D^{[n+1]}\\
A_a & 0 & 0 & A^{[n+1]}.
\end{array}
\]
Set $T:=C_E^a+C_D^b-2A_a$.
By Lemma~\ref{lem:degree-one-hilbert-relation},
\[
\res_*T=E,\qquad
(p_b)_*T=C_D^{[n]},\qquad
(p_a)_*T=2F_{[n+1]}.
\]
Hence $T\in K$.

\begin{lem}
\label{lem:degree-one-collision-bound}
Let $\Gamma\subset\Xcal$ be an irreducible curve such that
$\res_*(\Gamma)=aE$ and $(p_b)_*(\Gamma)=aC_D^{[n]}$
for some $a>0$.
Then
\[
\mathcal E\cdot\Gamma\le(n+2)a.
\]
\end{lem}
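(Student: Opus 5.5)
The plan is to split according to whether $\Gamma\subset\mathcal E$. In the main case $\Gamma\not\subset\mathcal E$, I will compute $\mathcal E\cdot\Gamma$ as a count of collisions between the moving point on $E$ and the moving subscheme. The target bound is
\[
(n+2)a=(n-1)a+3a,
\]
where $3a=E\cdot(aD)$ and $n-1$ is the number of points of $Z_t$ other than the moving one. This matches the intersection count in Section~\ref{subsec:nested-background}, where $B^{[n]}\cdot C_D^{[n]}=2(n-1)$.

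\textbf{Step 1: reduce to a collision count.} Let $\Gamma_0:=q(\Gamma)\subset S\times S^{[n]}$, and let $\nu\colon\widetilde\Gamma\to\Gamma$ be the normalization, with $t\mapsto(x(t),Z_t)$. The hypothesis $\res_*\Gamma=aE$ forces $x(t)\in E$, and $t\mapsto x(t)$ has degree $a'$ onto $E$. Since $q$ is the blow-up along $\mathcal Z_n$, we have $\mathcal E\cdot\Gamma=\deg\nu^*\mathcal O(\mathcal E)$. This is at most the length of $\nu^{-1}q^{-1}(\mathcal Z_n)$, i.e.\ the number of $t$ with $x(t)\in\Supp Z_t$, counted with multiplicity. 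At an isolated point of $\Gamma_0\cap\mathcal Z_n$, the multiplicity of $\mathcal E$ along $\Gamma$ is the order of vanishing of $\nu^*$ of the ideal of $\mathcal Z_n$, and this is at most the order of contact of $x(t)$ with $Z_t$.

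\textbf{Step 2: bound the count on $S\times S$.} I will pass to the incidence curve
\[
\Sigma=\{(x(t),y)\mid y\in Z_t\}\subset E\times S,
\]
taken with its natural multiplicities from the universal family $\mathcal Z_n\to S^{[n]}$. The collision count is then bounded by $\Sigma\cdot\Delta_E$, where $\Delta_E=\{(x,x)\}\subset E\times S$ is a curve in the threefold $E\times S$ (intersection of a curve with a divisor after deforming). A cleaner route is to intersect in $E\times S$ with the divisor $E\times E$, since $\Delta_E\subset E\times E$; this gives an upper bound for the multiplicities of Step~1. The class of $\Sigma$ is read off from the two pushforwards. Its projection to $S$ is $(p_b)_*\Gamma$ read through the universal family, so its $S$-class is $aD$ plus the fixed part. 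Its projection to $E$ has degree $n\cdot(\text{degree of }x)$.

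Concretely, $\Sigma$ decomposes into a part sweeping the moving component and $n-1$ components of the form $\{(x(t),q_j(t))\}$. Because $(p_b)_*\Gamma$ is numerically $aC_D^{[n]}$, pairing with $L^{[n]}$ and $B^{[n]}$ shows these $q_j$ contribute no $L$-degree. So they are constant, or at worst their motion is absorbed into the boundary degree $a(2n-2)$.

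For the moving part, the collisions with $E$ number at most $E\cdot(aD)=3a$. For each of the $n-1$ remaining points $q_j$, a collision with $x(t)$ can only occur if $q_j\in E$, and then it occurs at most $\deg(x)=a$ times. Together these give at most $(n-1)a+3a=(n+2)a$. Here I must check that the degree of $x\colon\widetilde\Gamma\to E$ equals $a$, which follows from $\res_*\Gamma=aE$.

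\textbf{Step 3: the case $\Gamma\subset\mathcal E$.} Here $\mathcal E|_{\mathcal E}=\mathcal O_{\PP}(-1)$ on the projectivized normal bundle of $\mathcal Z_n$ (generically a $\PP^1$-bundle). So $\mathcal E\cdot\Gamma$ equals minus the relative $\mathcal O(1)$-degree plus a pullback term from $\mathcal Z_n$. I expect this term to be bounded by the same collision count as in Step~2 applied to $q(\Gamma)\subset\mathcal Z_n$, together with $\mathcal O(1)$ being relatively effective on curves not in fibers. As a fallback, one can use $2\mathcal E\cdot\Gamma=B^{[n+1]}\cdot(p_a)_*\Gamma-2(n-1)a$. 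The $L$-degree of $(p_a)_*\Gamma$ is $2aF$, so the Mori cone description of $S^{[n+1]}$ gives the weaker bound $(n+3)a$. This does not suffice on its own, so it must be sharpened by showing $(p_a)_*\Gamma$ has a nonzero $C_{E'}^{[n+1]}$-component.

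\textbf{Main obstacle.} The hardest step is the multiplicity bookkeeping in Step~2. The difficulty is at points where $Z_t$ is nonreduced, or where $x(t)$ collides with a point of $Z_t$ at which the subscheme has higher length. There the local multiplicity of $\nu^*\mathcal E$ must be shown to be dominated by the intersection multiplicity of $\Sigma$ with $E\times E$. I would first try to prove the local inequality
\[
\ord_t\bigl(\nu^*I_{\mathcal Z_n}\bigr)\le\ord_t\bigl(\Sigma\cdot(E\times E)\bigr)
\]
using that $I_{\mathcal Z_n}$ restricted to $\{x\}\times S^{[n]}$ contains the norm-type function $\prod_y(\text{local equation of }E)$.
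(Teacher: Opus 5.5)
Your Step~1 and your target split $(n+2)a=3a+(n-1)a$ match the paper. However, Step~3 is a genuine gap. You leave $\Gamma\subset\mathcal E$ to an expectation, and your fallback bound $(n+3)a$ is exactly $\mathcal E\cdot(aT)$. Sharpening it would therefore amount to what the lemma is used to prove in Theorem~\ref{thm:degree-one-failure}. The missing observation is that this case cannot occur.

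To see this, pass to a finite cover $\pi\colon C'\to C$ over which the family carries a flag $\mathcal Z_1\subset\cdots\subset\mathcal Z_n$; the paper lifts to the full flag Hilbert scheme $S^{[1,\ldots,n]}$. The points of $Z_t$ then become sections $z_1,\dots,z_n\colon C'\to S$ with $\sum_i(z_i)_*[C']=arD$. Since $\RR_{\ge0}[D]$ is extremal in $\NEbar(S)$ and $D^2=-1$, every nonconstant $z_i$ has image $D$. If $q(\Gamma)\subset\mathcal Z_n$, then $f':=x\circ\pi$ satisfies $f'(t)\in\{z_1(t),\dots,z_n(t)\}$ for all $t$. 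Irreducibility of $C'$ then forces $f'=z_i$ for some $i$. That $z_i$ is nonconstant with image $E\neq D$, a contradiction.

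The same two ingredients, the flag lift and extremality, are what Step~2 is missing. Your claim that pairing with $L^{[n]}$ and $B^{[n]}$ forces the other $n-1$ points to be constant, or to have their motion ``absorbed into the boundary degree,'' is not valid. $L^{[n]}$ fixes only the total class $aD$, several points may move, and $B^{[n]}$ carries no information about motion. What you actually need is that no moving point traces $E$, since otherwise the collision bound $E\cdot(aD)$ fails; that is exactly the extremality statement above.

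More seriously, you do not establish the local inequality you identify as the main obstacle, and the route you propose breaks exactly at the collisions counted in $(n-1)a$. If a constant point $q_j$ lies on $E$, the component $E\times\{q_j\}$ of $\Sigma$ lies inside $E\times E$. So $\Sigma\cdot(E\times E)$ gives no local bound there, and the norm of $s_E$ vanishes identically along $\Gamma$ near such $t$.

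The paper's key device is the inclusion $\prod_{i=1}^n\mathcal I_{\Gamma_{z_i}}\subseteq\mathcal I_{\mathcal Z_n}$ on $S\times C'$. It holds because each quotient $\mathcal I_{\mathcal Z_{i-1}}/\mathcal I_{\mathcal Z_i}$ is invertible over $C'$ and annihilated by $\mathcal I_{\Gamma_{z_i}}$. Pulled back along the graph of $f'$, it gives $m_t\le\sum_i\iota_t(f',z_i)$ at every $t$, including nonreduced points and simultaneous collisions. Each term is then bounded on its own. For nonconstant $z_i$ one has $\iota_t(f',z_i)\le\ord_t(z_i^*s_E)$, summing over $t$ to $3e_i$. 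For a constant $z_i=q_i\in E$, the sum over $t$ is at most $\deg(f')^*\mathcal O_E(q_i)=ar$.
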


\begin{proof}
Let $\nu\colon C\to\Gamma$ be the normalization and set $f=\res\circ\nu$, $g=p_b\circ\nu$.
Then $\deg f=a$.

Let $S^{[1,\ldots,n]}$ denote the full flag Hilbert scheme parametrizing chains
$Z_1\subset Z_2\subset\cdots\subset Z_n$ with $\length(Z_i)=i$.
The forgetful morphism
$S^{[1,\ldots,n]}\longrightarrow S^{[n]}$,
$(Z_1\subset\cdots\subset Z_n)\longmapsto Z_n$,
is projective and surjective.
Hence some irreducible component of
$C\times_{S^{[n]}}S^{[1,\ldots,n]}$
dominates $C$.
Since it is projective over $C$, choose an irreducible curve dominating $C$.
After normalizing, we obtain a finite surjective morphism $\pi\colon C'\to C$ of degree $r$, together with a lift of $g\circ\pi$ to the full flag Hilbert scheme.
Pulling back the universal flag on $S^{[1,\ldots,n]}$ along the lift $C'\to S^{[1,\ldots,n]}$, we obtain
\[
\mathcal Z_1
\subset
\mathcal Z_2
\subset
\cdots
\subset
\mathcal Z_n
\subset
S\times C',
\]
where $\mathcal Z_i$ is finite flat of degree $i$ over $C'$.
For each $i$, the exact sequence
\[
0\longrightarrow
\mathcal I_{\mathcal Z_{i-1}}/\mathcal I_{\mathcal Z_i}
\longrightarrow
\mathcal O_{\mathcal Z_i}
\longrightarrow
\mathcal O_{\mathcal Z_{i-1}}
\longrightarrow 0
\]
shows that $\mathcal I_{\mathcal Z_{i-1}}/\mathcal I_{\mathcal Z_i}$ is finite flat of rank one over $C'$.
Its support therefore has degree one over $C'$ and defines a section $z_i\colon C'\to S$.
The Hilbert--Chow cycle of $\mathcal Z_n$ is then $z_1+\cdots+z_n$.
Set $f':=f\circ\pi$, so that $\deg f'=ar$.
For every $H\in N^1(S)_\RR$,
\[
H\cdot\sum_{i=1}^n(z_i)_*[C']
=
r\,H^{[n]}\cdot(p_b)_*(\Gamma)
=
ar\,H\cdot D.
\]
Hence $\sum_{i=1}^n(z_i)_*[C']=arD$ in $N_1(S)_\RR$.

Since $\RR_{\ge0}[D]$ is an extremal ray of $\NEbar(S)$, every nonzero class $(z_i)_*[C']$ lies on this ray. 
Hence, for each nonconstant $z_i$, $(z_i)_*[C']=e_iD$ for some $e_i>0$. 
Since $D^2=-1$, the image of $z_i$ is necessarily $D$: otherwise it would be an irreducible curve distinct from $D$ having negative intersection with $D$. 
Then
\[
\sum_{z_i\text{ nonconstant}}e_i=ar.
\] 

The curve $\Gamma$ is not contained in $\mathcal E$.
Otherwise its image under $q$ would lie in the universal family $\mathcal Z_n$.
After base change to $C'$, we would have
\[
f'(t)\in\Supp(\mathcal Z_{n,t})=\{z_1(t),\ldots,z_n(t)\}
\]
for every $t\in C'$. Thus
\[
C'=\bigcup_{i=1}^n Z_i,
\qquad
Z_i:=\{t\in C'\mid f'(t)=z_i(t)\}.
\]
Each $Z_i$ is closed, since it is the inverse image of the diagonal under
$(f',z_i):C'\longrightarrow S\times S$.
Since $C'$ is irreducible, $Z_i=C'$ for some $i$.
Hence $f'=z_i$ as morphisms, and therefore
\[
z_i(C')=f'(C')=E.
\]
Every nonconstant $z_i$ has image $D$, whereas a constant $z_i$ cannot equal the nonconstant map $f'$.
This is a contradiction.

Let $\Gamma_{z_i}\subset S\times C'$ denote the graph of $z_i$.
Since $\mathcal I_{\mathcal Z_{i-1}}/\mathcal I_{\mathcal Z_i}$ is supported on $\Gamma_{z_i}$, its restriction to the complement of $\Gamma_{z_i}$ vanishes.
Equivalently,
$\mathcal I_{\mathcal Z_{i-1}}\mathcal I_{\Gamma_{z_i}}
\subseteq\mathcal I_{\mathcal Z_i}$.
Applying this inclusion successively for $i=1,\dots,n$ gives
\[
\prod_{i=1}^n\mathcal I_{\Gamma_{z_i}}
\subseteq
\mathcal I_{\mathcal Z_n}.
\]

Let $\Gamma_{f'}\subset S\times C'$ be the graph of $f'$.
Since $\Gamma\not\subset\mathcal E$, the pullback of $\mathcal I_{\mathcal Z_n}$ to $\Gamma_{f'}\simeq C'$ is nonzero.
For $t\in C'$, set
\[
m_t:=\ord_t(\Gamma_{f'}^*\mathcal I_{\mathcal Z_n}),
\qquad
\iota_t(f',z_i):=
\ord_t(\Gamma_{f'}^*\mathcal I_{\Gamma_{z_i}}).
\]

Pulling back
$\prod_{i=1}^n\mathcal I_{\Gamma_{z_i}}\subseteq\mathcal I_{\mathcal Z_n}$
to $\Gamma_{f'}$ and taking orders at $t$ gives
\[
m_t\le\sum_{i=1}^n\iota_t(f',z_i).
\]
Since $q$ is the blow-up along the universal family
$\mathcal Z_n\subset S\times S^{[n]}$,
\[
\mathcal O_{\Xcal}(-\mathcal E)
=
\mathcal I_{\mathcal Z_n}\mathcal O_{\Xcal}.
\]
Let $h\colon C'\longrightarrow\Xcal$ be the composition
$C'\xrightarrow{\pi}C\to\Gamma\hookrightarrow\Xcal$.
Since $q\circ h=(f',g\circ\pi)$, the pullback of $\mathcal I_{\mathcal Z_n}$ to $C'$ is $\Gamma_{f'}^*\mathcal I_{\mathcal Z_n}$.
Hence
\[
h^*\mathcal O_{\Xcal}(-\mathcal E)
=
\Gamma_{f'}^*\mathcal I_{\mathcal Z_n}
=
\mathcal O_{C'}\left(-\sum_{t\in C'}m_t[t]\right).
\]
Taking degrees and using $h_*[C']=r[\Gamma]$, where $r=\deg(\pi)$, gives
\[
\sum_{t\in C'}m_t
=
r\,\mathcal E\cdot\Gamma.
\]
Thus
\[
r\,\mathcal E\cdot\Gamma
\le
\sum_{i=1}^n\sum_{t\in C'}\iota_t(f',z_i).
\]

Suppose first that $z_i$ is nonconstant.
Then
\[
z_i(C')=D,
\qquad
(z_i)_*[C']=e_iD.
\]
If $f'(t)=z_i(t)$, then the common value lies in $E\cap D$.
Choose local coordinates $(x,y)$ on $S$ near this point such that $E=\{x=0\}$.
Since $z_i(C')=D$ and $D\neq E$, the function $x\circ z_i$ is not identically zero.
The coincidence of the two graphs at $t$ implies
\[
\iota_t(f',z_i)
\le
\ord_t(x\circ z_i)
=
\ord_t(z_i^*s_E),
\]
where $s_E$ is a local equation of $E$.
Therefore
\[
\sum_{t\in C'}\iota_t(f',z_i)
\le
\deg z_i^*E
=
E\cdot(z_i)_*[C']
=
3e_i.
\]
Since $\sum_{z_i\text{ nonconstant}}e_i=ar$, the total contribution from the nonconstant $z_i$ is at most $3ar$.

Now suppose that $z_i$ is constant, with value $q_i\in S$.
If $q_i\notin E$, then the graphs of $f'$ and $z_i$ are disjoint.
If $q_i\in E$, then
\[
\sum_{t\in C'}\iota_t(f',z_i)
\le
\deg (f')^*\mathcal O_E(q_i)
=
ar.
\]
There are at most $n-1$ constant $z_i$, so their total contribution is at most $(n-1)ar$.
Therefore
\[
r\,\mathcal E\cdot\Gamma
\le
3ar+(n-1)ar
=
(n+2)ar.
\]
Dividing by $r$ gives
\[
\mathcal E\cdot\Gamma\le(n+2)a.
\]
\end{proof}

\begin{prop}
\label{prop:degree-one-supporting-ray}
There exists a semiample $\QQ$-divisor $L$ on $\Xcal$ such that
\[
K\cap L^\perp
=
\RR_{\ge0}[T].
\]
\end{prop}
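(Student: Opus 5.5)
The plan is to build $L$ inside the cone $P$ as a sum of three pullbacks, each of which cuts out exactly one extremal ray of the relevant Mori cone. Then I will check that the three pushforward constraints together pin down a single ray in $N_1(\Xcal)_\RR$.

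\emph{Step 1: choice of the three summands.} Since $\NEbar(S)$ is polyhedral, generated by the $240$ $(-1)$-curves, the ray $\RR_{\ge0}[E]$ is an exposed face. I take $L_1$ to be the pullback of an ample class under the contraction of $E$ (a blow-down to a degree-two del Pezzo surface). Then $L_1$ is semiample and $\NEbar(S)\cap L_1^\perp=\RR_{\ge0}[E]$.

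The Mori cone $\NEbar(S^{[n]})$ is polyhedral by \cite[Theorem~5.1]{BHL+16}, and $C_D^{[n]}$ spans one of its extremal rays. I choose a rational class $L_2$ in the relative interior of the dual face, so that $\NEbar(S^{[n]})\cap L_2^\perp=\RR_{\ge0}[C_D^{[n]}]$. In the same way I choose $L_3$ on $S^{[n+1]}$ with $\NEbar(S^{[n+1]})\cap L_3^\perp=\RR_{\ge0}[F_{[n+1]}]$.

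Semiampleness of $L_2$ and $L_3$ is the step I expect to be the main obstacle. I would first try to quote the base-point-freeness of the nef generators established in \cite{BHL+16} (the nef cone there is spanned by explicitly base-point-free classes). A positive rational combination of base-point-free generators of the relevant facet of the nef cone is then semiample. If that is not available in the needed form, the fallback is to exhibit $L_3$ directly as a positive combination of $F^{[n+1]}$-type and Bridgeland/Gieseker wall classes that are known to be base point free. Finally set
\[
L:=\res^*L_1+p_b^*L_2+p_a^*L_3 .
\]
This is semiample as a sum of pullbacks of semiample divisors, and it lies in $P$.

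\emph{Step 2: reduction to pushforwards.} Let $\alpha\in K$ with $L\cdot\alpha=0$. By the projection formula,
\[
L\cdot\alpha=L_1\cdot\res_*\alpha+L_2\cdot(p_b)_*\alpha+L_3\cdot(p_a)_*\alpha .
\]
Each term is nonnegative on $K$, so all three vanish. Hence
\[
\res_*\alpha=aE,\qquad (p_b)_*\alpha=bC_D^{[n]},\qquad (p_a)_*\alpha=cF_{[n+1]}\qquad\text{with }a,b,c\ge0.
\]
Conversely, $T$ satisfies these conditions with $a=b=1$ and $c=2$, by the pushforward computation preceding Lemma~\ref{lem:degree-one-collision-bound}. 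So $\RR_{\ge0}[T]\subseteq K\cap L^\perp$.

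\emph{Step 3: the pushforwards determine $\alpha$.} The classes $H_i^{\diff}$, $H_i^b$, $B^b$ and $B^a$ span $N^1(\Xcal)_\RR$, and each is a pullback under $\res$, $p_b$ or $p_a$. Therefore the linear map $\alpha\mapsto(\res_*\alpha,(p_b)_*\alpha,(p_a)_*\alpha)$ is injective.

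Next I impose compatibility. By Lemma~\ref{lem:La=Ldiff+Lb-nested}, for every $H\in N^1(S)_\RR$ we have
\[
H^{[n+1]}\cdot(p_a)_*\alpha=H\cdot\res_*\alpha+H^{[n]}\cdot(p_b)_*\alpha .
\]
Using the intersection table, this reads $c\,H\cdot F=a\,H\cdot E+b\,H\cdot D$ for all $H$. Since $E+D=2F$ and $E$, $D$ are numerically independent ($E\cdot D=3\neq -1=E^2$), this forces $a=b=c/2$.

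Thus $\alpha$ and $\tfrac{c}{2}T$ have the same three pushforwards, and injectivity gives $\alpha=\tfrac{c}{2}T$. This proves $K\cap L^\perp=\RR_{\ge0}[T]$; note $T\neq0$ because $\res_*T=E\neq0$.
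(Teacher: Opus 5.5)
\textbf{Gap: semiampleness of $L_2$ and $L_3$.} The statement asks for a \emph{semiample} $L$, and Theorem~\ref{thm:degree-one-failure} needs this to produce a contraction. As written, your treatment of $L_2,L_3$ is a hope rather than an argument. You do not point to a result giving base-point-freeness of the relevant nef classes, you do not check that your abstractly chosen $L_2,L_3$ are positive combinations of such classes, and the ``fallback'' is unspecified.

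The missing idea is the base-point-free theorem \cite[Theorem~3.3]{kollar1998birational}. The variety $S^{[m]}$ is smooth and $-K_{S^{[m]}}=F^{[m]}=\HC_m^*F^{(m)}$ is nef and big. So for any nef $\QQ$-divisor $N$ on $S^{[m]}$, and $k$ with $kN$ Cartier, the class $kN-K_{S^{[m]}}$ is nef and big, hence $N$ is semiample. With this, any rational $L_2,L_3$ in the relative interiors of the dual faces work.

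The paper instead writes down explicit divisors: $Q_D=((n-1)F+\lambda N_D)^{[n]}-\frac12B^{[n]}$ with $\lambda N_D\cdot F>1$, and $Q_F=(n+1)F^{[n+1]}-\frac12B^{[n+1]}$. It checks on the generators of the Mori cones that $Q_D^\perp$ and $Q_F^\perp$ cut out exactly $\RR_{\ge0}[C_D^{[n]}]$ and $\RR_{\ge0}[F_{[n+1]}]$. It then verifies that $Q_D-K_{S^{[n]}}$ and $Q_F-K_{S^{[n+1]}}$ are ample. These computations also prove that $C_D^{[n]}$ and $F_{[n+1]}$ span extremal (indeed exposed) rays, which you only assert. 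A generating set of a cone need not consist of extremal rays, so this short check is needed.

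\textbf{The rest is correct.} Your $L_1$, the pullback of an ample class under the blow-down of $E$, is already semiample and cuts out $\RR_{\ge0}[E]$. Steps~2 and~3 follow the paper except for the final identification.
\begin{itemize}
\item You use that $N^1(\Xcal)_\RR$ is spanned by the pullbacks $H_i^{\diff},H_i^b,B^b,B^a$, so $\alpha\mapsto(\res_*\alpha,(p_b)_*\alpha,(p_a)_*\alpha)$ is injective.
\item The paper uses that the blow-up $q$ has relative Picard number one, writes $\gamma-a(C_E^a+C_D^b)=uA_a$, and solves for $u$ via Lemma~\ref{lem:degree-one-hilbert-relation}.
\end{itemize}
Your version is shorter, and it is exactly the argument the paper uses in the universal-family analogue.

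One small imprecision: $E\cdot D=3\neq-1=E^2$ only shows $D\neq E$. For linear independence you must also exclude $D=tE$; this would force $t=-3$, contradicting $F\cdot D=1$.
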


\begin{proof}
Since $\NEbar(S)$ is rational polyhedral, choose $N_E,N_D\in\Nef(S)_\QQ$ such that
\[
\NEbar(S)\cap N_E^\perp=\RR_{\ge0}[E],
\qquad
\NEbar(S)\cap N_D^\perp=\RR_{\ge0}[D].
\]
Set $d:=N_D\cdot F>0$ and choose $\lambda\in\QQ$ with $\lambda>1/d$.
Define
\[
Q_D:=\bigl((n-1)F+\lambda N_D\bigr)^{[n]}-\frac12B^{[n]},
\qquad
Q_F:=(n+1)F^{[n+1]}-\frac12B^{[n+1]}.
\]
For every $(-1)$-curve $E'\neq D$,
$Q_D\cdot C_{E'}^{[n]}=\lambda N_D\cdot E'>0$,
while
\[
Q_D\cdot C_D^{[n]}=0,
\qquad
Q_D\cdot F_{[n]}=\lambda d-1>0,
\qquad
Q_D\cdot A^{[n]}=1.
\]
Thus
\[
\NEbar(S^{[n]})\cap Q_D^\perp
=
\RR_{\ge0}[C_D^{[n]}].
\]
For $Q_F$,
\[
Q_F\cdot C_{E'}^{[n+1]}=1,
\qquad
Q_F\cdot F_{[n+1]}=0,
\qquad
Q_F\cdot A^{[n+1]}=1,
\]
so
\[
\NEbar(S^{[n+1]})\cap Q_F^\perp
=
\RR_{\ge0}[F_{[n+1]}].
\]

Since $N_E-K_S=N_E+F$ is ample, $N_E$ is semiample by the base-point-free theorem \cite[Theorem~3.3]{kollar1998birational}.
Also $K_{S^{[m]}}=(K_S)^{[m]}$, and
$Q_D-K_{S^{[n]}}=\bigl(nF+\lambda N_D\bigr)^{[n]}-\frac12B^{[n]}$.
Its intersections with the generators of $\NEbar(S^{[n]})$ are
\[
\begin{aligned}
(Q_D-K_{S^{[n]}})\cdot C_D^{[n]}&=1,\\
(Q_D-K_{S^{[n]}})\cdot C_{E'}^{[n]}
&=1+\lambda N_D\cdot E'>0
\qquad(E'\neq D),\\
(Q_D-K_{S^{[n]}})\cdot F_{[n]}&=\lambda d>0,\\
(Q_D-K_{S^{[n]}})\cdot A^{[n]}&=1.
\end{aligned}
\]
Thus $Q_D$ is nef, and $Q_D-K_{S^{[n]}}$ is ample.
Hence $Q_D$ is semiample by the base-point-free theorem.
Similarly,
\[
Q_F-K_{S^{[n+1]}}
=
(n+2)F^{[n+1]}
-\frac12B^{[n+1]}
\]
has positive intersection with every extremal ray of $\NEbar(S^{[n+1]})$, so $Q_F$ is nef and $Q_F-K_{S^{[n+1]}}$ is ample.
Thus $Q_F$ is semiample by the base-point-free theorem.

Set $L:=\res^*N_E+p_b^*Q_D+p_a^*Q_F$.
Then $L$ is semiample and $L\cdot T=0$.

Let $\gamma\in K\cap L^\perp$. Its three pushforwards lie in the corresponding Mori cones, and
\[
0
=
N_E\cdot\res_*\gamma
+
Q_D\cdot(p_b)_*\gamma
+
Q_F\cdot(p_a)_*\gamma.
\]
Since all three terms are nonnegative, they all vanish.
Hence
\[
\res_*\gamma=aE,
\qquad
(p_b)_*\gamma=bC_D^{[n]},
\qquad
(p_a)_*\gamma=cF_{[n+1]}
\]
for some $a,b,c\ge0$.
The relation $H^a=H^{\diff}+H^b$ gives
$c(H\cdot F)=a(H\cdot E)+b(H\cdot D)$
for every $H\in N^1(S)_\RR$, so $cF=aE+bD$.
Since $D=2F-E$, it follows that $a=b$ and $c=2a$.

The blow-up decomposition
\[
N^1(\Xcal)_\RR
=
q^*N^1(S\times S^{[n]})_\RR
\oplus
\RR[\mathcal E]
\]
shows that the relative Picard number of $q$ is one.
Therefore
\[
\ker\left(
q_*\colon N_1(\Xcal)_\RR
\longrightarrow
N_1(S\times S^{[n]})_\RR
\right)
=
\RR[A_a].
\]
Hence $\gamma-a(C_E^a+C_D^b)=uA_a$ for some $u\in\RR$.
Pushing forward by $p_a$ and using Lemma~\ref{lem:degree-one-hilbert-relation}, we obtain
\[
\begin{aligned}
2aF_{[n+1]}
&=
a\bigl(C_E^{[n+1]}+C_D^{[n+1]}\bigr)
+
uA^{[n+1]}\\
&=
2aF_{[n+1]}
+
(2a+u)A^{[n+1]}.
\end{aligned}
\]
Hence $u=-2a$ and $\gamma=aT$, proving $K\cap L^\perp=\RR_{\ge0}[T]$.
\end{proof}

\begin{thm}
\label{thm:degree-one-failure}
Let $S$ be a del Pezzo surface of degree $1$ and let $n\ge2$.
Then
\[
\res^*\Nef(S)
+
p_b^*\Nef(S^{[n]})
+
p_a^*\Nef(S^{[n+1]})
\subsetneq
\Nef(S^{[n,n+1]}).
\]
\end{thm}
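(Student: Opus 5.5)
The plan is to show that the class $T=C_E^a+C_D^b-2A_a$, which lies in $K=P^\vee$, does not lie in $\NEbar(\Xcal)$. Since $P\subseteq\Nef(\Xcal)$ and $\NEbar(\Xcal)=\Nef(\Xcal)^\vee$, the existence of a class in $K\setminus\NEbar(\Xcal)$ forces $\Nef(\Xcal)\neq P$. The reverse inclusion $P\subseteq\Nef(\Xcal)$ is automatic, because pullbacks of nef divisors are nef. The proof combines Proposition~\ref{prop:degree-one-supporting-ray} with Lemma~\ref{lem:degree-one-collision-bound}, and uses the exceptional divisor $\mathcal E$ as a test divisor.

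First compute $\mathcal E\cdot T$ from the intersection table in Subsection~\ref{subsec:nested-background}. The curves $E$ and $D$ are smooth, so $\delta=0$, and we get
$B^{\diff}\cdot C_E^a=2n$, $B^{\diff}\cdot C_D^b=2$ and $B^{\diff}\cdot A_a=-2$. Hence $B^{\diff}\cdot T=2n+2+4=2n+6$. Since $B^{\diff}=2\mathcal E$, this gives $\mathcal E\cdot T=n+3$.

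Now suppose, for contradiction, that $T\in\NEbar(\Xcal)$. Let $L$ be the semiample divisor of Proposition~\ref{prop:degree-one-supporting-ray}, and let $\varphi\colon\Xcal\to Y$ be the morphism given by a multiple of $L$.
\begin{enumerate}
\item The face $\NEbar(\Xcal)\cap L^\perp$ is the relative cone $\NEbar(\Xcal/Y)$. It is therefore the closure of the cone spanned by classes of irreducible curves contracted by $\varphi$.
\item This face is contained in $K\cap L^\perp=\RR_{\ge0}[T]$, and by assumption it contains the nonzero class $T$. So it equals the ray $\RR_{\ge0}[T]$.
\item A closed cone generated by classes of contracted curves that equals a nonzero ray must contain a nonzero generator. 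Hence some irreducible contracted curve $\Gamma$ satisfies $[\Gamma]=aT$ with $a>0$.
\item From $\res_*T=E$ and $(p_b)_*T=C_D^{[n]}$ we get $\res_*\Gamma=aE$ and $(p_b)_*\Gamma=aC_D^{[n]}$.
\end{enumerate}
Lemma~\ref{lem:degree-one-collision-bound} then gives $\mathcal E\cdot\Gamma\le(n+2)a$. On the other hand, $\mathcal E\cdot\Gamma=a\,\mathcal E\cdot T=(n+3)a$, which is a contradiction. Therefore $T\notin\NEbar(\Xcal)$, and the inclusion is strict.

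The main point needing care is step (3), the passage from the numerical class $T$ to an actual irreducible curve. It relies on the standard fact that, for a semiample $L$, the face $L^\perp\cap\NEbar(\Xcal)$ is generated by the curves contracted by $\varphi$. I would justify this by noting that $\NEbar(\Xcal/Y)$ is the closure of the cone of $\varphi$-contracted curves and coincides with $\NEbar(\Xcal)\cap L^\perp$, since $L$ is the pullback of an ample class on $Y$. The geometric input, Lemma~\ref{lem:degree-one-collision-bound}, has already been established.
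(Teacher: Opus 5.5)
Your proposal is correct and follows the paper's proof essentially step for step: the same class $T\in K$, the same semiample $L$ from Proposition~\ref{prop:degree-one-supporting-ray}, and the same contradiction between $\mathcal E\cdot\Gamma=(n+3)a$ and the bound $(n+2)a$ from Lemma~\ref{lem:degree-one-collision-bound}. The differences are cosmetic. You compute $\mathcal E\cdot T$ directly from the intersection table rather than via pushforwards. You also produce the curve $\Gamma$ through the identification $\NEbar(\Xcal)\cap L^\perp=\NEbar(\Xcal/Y)$, whereas the paper only notes that $\varphi$ cannot be finite (otherwise $L$ would be ample, contradicting $L\cdot T=0$) and takes a curve in a positive-dimensional fiber.
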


\begin{proof}
Let $L$ be as in Proposition~\ref{prop:degree-one-supporting-ray}.
Suppose that $T\in\NEbar(\Xcal)$. Since $L$ is semiample, a sufficiently divisible multiple defines a morphism $\varphi\colon\Xcal\to Y$ with $rL=\varphi^*A$ for an ample divisor $A$ on $Y$.

As $T\neq0$ and $L\cdot T=0$, $\varphi$ is not finite.
Choose an irreducible curve $\Gamma$ in a positive-dimensional fiber.
Then $[\Gamma]\in K\cap L^\perp$, so Proposition~\ref{prop:degree-one-supporting-ray}
gives $[\Gamma]=aT$ for some $a>0$.
Thus $\res_*\Gamma=aE$ and $(p_b)_*\Gamma=aC_D^{[n]}$,
and Lemma~\ref{lem:degree-one-collision-bound} yields
$\mathcal E\cdot\Gamma\le(n+2)a$.

Using $2\mathcal E=B^a-B^b$ and the pushforwards of $T$ computed above,
\[
\mathcal E\cdot\Gamma
=
a\,\mathcal E\cdot T
=
\frac a2
\left(
B^{[n+1]}\cdot 2F_{[n+1]}
-
B^{[n]}\cdot C_D^{[n]}
\right)
=
\frac a2\left(4(n+1)-(2n-2)\right)
=
a(n+3).
\]
This is a contradiction.
Hence $T\in K\setminus\NEbar(\Xcal)$.
Since $K=P^\vee$, this gives $P\subsetneq\Nef(\Xcal)$.
\end{proof}

\begin{rmk}
The additional Mori ray $F_{[n+1]}$ is contracted by the semiample divisor
\[
Q_F=(n+1)F^{[n+1]}-\frac12B^{[n+1]}.
\]
The associated contraction
\[
\phi_F\colon S^{[n+1]}\to
Y_F:=
\operatorname{Proj}
\bigoplus_{r\ge0}
H^0\!\left(S^{[n+1]},rQ_F\right)
\]
satisfies
\[
\NEbar(S^{[n+1]})\cap Q_F^\perp
=
\RR_{\ge0}[F_{[n+1]}].
\]
\end{rmk}

\section{The degree-one universal family case}
\label{sec:degree-one-universal}

Let $S$ be a del Pezzo surface of degree $1$.
We retain the notation $F=-K_S$ and the $(-1)$-curves $E,D$ from Section~\ref{sec:degree-one}, so that $E+D=2F$ and $E\cdot D=3$.

We use the following variant of Lemma~\ref{lem:degree-one-collision-bound}, in which only the Hilbert--Chow pushforward is prescribed.

\begin{lem}
\label{lem:degree-one-collision-bound-hc}
Let $m\ge1$, and let
\[
q_m=(\res,p_b):
S^{[m,m+1]}\longrightarrow S\times S^{[m]}
\]
be the blow-up along the universal family. 
Let $\mathcal E_m$ denote its exceptional divisor. 
Suppose that $\Gamma\subset S^{[m,m+1]}$ is an irreducible curve such that
\[
\res_*(\Gamma)=aE,
\qquad
(\HC_m\circ p_b)_*(\Gamma)=aC_D^{(m)}
\]
for some $a>0$. 
For $m=1$, we identify $S^{(1)}$ with $S$ and $C_D^{(1)}$ with $D$. 
Then
\[
\mathcal E_m\cdot\Gamma\le(m+2)a.
\]
\end{lem}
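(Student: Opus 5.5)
We follow the proof of Lemma~\ref{lem:degree-one-collision-bound} almost verbatim, replacing $n$ by $m$. The only input from $p_b$ that argument used was the class of the Hilbert--Chow cycle $\sum_i (z_i)_*[C']$ in $N_1(S)_\RR$, so the weaker hypothesis on $(\HC_m\circ p_b)_*\Gamma$ suffices.

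First we normalize $\Gamma$ to $\nu\colon C\to\Gamma$, and set $f=\res\circ\nu$ (of degree $a$ onto $E$) and $g=p_b\circ\nu$. As before, we pass to a finite cover $\pi\colon C'\to C$ of degree $r$ over which $g\circ\pi$ lifts to the full flag Hilbert scheme $S^{[1,\dots,m]}$. This produces sections $z_1,\dots,z_m\colon C'\to S$ whose sum is the Hilbert--Chow cycle of the pulled-back family $\mathcal Z_m$; we also obtain the inclusion $\prod_i\mathcal I_{\Gamma_{z_i}}\subseteq\mathcal I_{\mathcal Z_m}$. For every $H$ we have $H\cdot\sum_i(z_i)_*[C']=r\,H^{(m)}\cdot(\HC_m\circ p_b)_*\Gamma=ar\,H\cdot D$, using $H^{[m]}=\HC_m^*H^{(m)}$ and $H^{(m)}\cdot C_D^{(m)}=H\cdot D$. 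Hence $\sum_i(z_i)_*[C']=arD$. Since $D$ spans an extremal ray and $D^2<0$, every nonconstant $z_i$ has image $D$ with $(z_i)_*[C']=e_iD$ and $\sum e_i=ar$. For $m=1$ the identification $S^{(1)}=S$ just says $z_1=g\circ\pi$ maps onto $D$ with degree $ar$, and no flag cover is needed.

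Next we check that $\Gamma\not\subset\mathcal E_m$. Otherwise $f'=f\circ\pi$ would coincide with some $z_i$ on all of $C'$ by irreducibility. This is impossible: $f'$ is nonconstant with image $E$, while each $z_i$ is either constant or has image $D\neq E$. Then $\mathcal O(-\mathcal E_m)=\mathcal I_{\mathcal Z_m}\mathcal O_{S^{[m,m+1]}}$ gives $r\,\mathcal E_m\cdot\Gamma=\sum_t m_t\le\sum_i\sum_t\iota_t(f',z_i)$.

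Finally we bound the local intersection terms. For a nonconstant $z_i$, each coincidence point lies in $E\cap D$, and the contribution is at most $\deg z_i^*E=E\cdot e_iD=3e_i$; these sum to at most $3ar$. For a constant $z_i=q_i$, the contribution is $0$ if $q_i\notin E$ and at most $\deg f'^*\mathcal O_E(q_i)=ar$ otherwise. The main point requiring care is the count of constant sections. At least one $z_i$ is nonconstant because $arD\neq0$, so there are at most $m-1$ constant ones. Summing gives $r\,\mathcal E_m\cdot\Gamma\le 3ar+(m-1)ar$, hence $\mathcal E_m\cdot\Gamma\le(m+2)a$.
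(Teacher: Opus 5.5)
Your proposal is correct and is essentially the paper's proof. For $m\ge2$ the paper reruns the argument of Lemma~\ref{lem:degree-one-collision-bound}, using only the Hilbert--Chow class to obtain $\sum_i(z_i)_*[C']=arD$. For $m=1$ it treats the blow-up of the diagonal separately, with the same local bound $\iota_t(f,g)\le\ord_t(g^*s_E)$ giving $3a$; your version simply absorbs this case into the general argument with $z_1=g$, and it also explicitly justifies why there are at most $m-1$ constant sections.
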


\begin{proof}
For $m\ge2$, pass to a finite cover $\pi:C'\to C$ of degree $r$ and lift to the full flag Hilbert scheme.
The hypothesis gives
\[
\sum_{i=1}^m(z_i)_*[C']=arD.
\]
For every $H\in N^1(S)_\RR$,
\[
H\cdot\sum_{i=1}^m(z_i)_*[C']
=
rH^{(m)}\cdot(\HC_m\circ p_b)_*(\Gamma)
=
arH\cdot D.
\]
Hence $\sum_{i=1}^m(z_i)_*[C']=arD$ in $N_1(S)_\RR$.
The same argument gives
\[
r\mathcal E_m\cdot\Gamma\le3ar+(m-1)ar=(m+2)ar.
\]

For $m=1$, $q_1:S^{[1,2]}\to S\times S$ is the blow-up along the diagonal.
Let $\nu:C\to\Gamma$ be the normalization and set $f=\res\circ\nu$, $g=p_b\circ\nu$.
Then $f_*[C]=aE$ and $g_*[C]=aD$.
Since $E\ne D$, $\Gamma$ is not contained in $\mathcal E_1$.
If $\iota_t(f,g)$ denotes the local coincidence multiplicity, then, using a local equation $s_E$ of $E$ at a common value,
\[
\iota_t(f,g)\le\ord_t(g^*s_E).
\]
Thus
\[
\mathcal E_1\cdot\Gamma
=\sum_t\iota_t(f,g)
\le E\cdot g_*[C]
=a(E\cdot D)=3a.
\]
\end{proof}

Set $\Xcal:=S^{[1,n]}$, $n\ge2$, and define
\[
P_{\mathrm{univ}}
:=
\pr_{\diff}^*\Nef(S)
+\pr_b^*\Nef(S^{(n-1)})
+\pr_a^*\Nef(S^{[n]}).
\]
By the projection formula and numerical duality,
\[
P_{\mathrm{univ}}^\vee
=(\pr_{\diff})_*^{-1}(\NEbar(S))
\cap(\pr_b)_*^{-1}(\NEbar(S^{(n-1)}))
\cap(\pr_a)_*^{-1}(\NEbar(S^{[n]}))
=:K_{\mathrm{univ}}.
\]

For $n=2$, set $S^{(1)}=S$ and $C_D^{(1)}=D$. 
The pushforwards are
\[
\begin{array}{c|ccc}
& (\pr_{\diff})_* & (\pr_b)_* & (\pr_a)_*\\
\hline
C_E^{\diff}&E&0&C_E^{[n]}\\
C_D^a&0&C_D^{(n-1)}&C_D^{[n]}\\
A_a&0&0&A^{[n]}.
\end{array}
\]
Set $T_{\mathrm{univ}}:=C_E^{\diff}+C_D^a-2A_a$.
By Lemma~\ref{lem:degree-one-hilbert-relation},
\[
(\pr_{\diff})_*T_{\mathrm{univ}}=E,\qquad
(\pr_b)_*T_{\mathrm{univ}}=C_D^{(n-1)},\qquad
(\pr_a)_*T_{\mathrm{univ}}=2F_{[n]}.
\]
Thus $T_{\mathrm{univ}}\in K_{\mathrm{univ}}$.

\begin{prop}
\label{prop:degree-one-universal-supporting-ray}
There exists a semiample $\QQ$-divisor $L_{\mathrm{univ}}$ on $\Xcal$ such that
\[
K_{\mathrm{univ}}\cap L_{\mathrm{univ}}^\perp
=
\RR_{\ge0}[T_{\mathrm{univ}}].
\]
\end{prop}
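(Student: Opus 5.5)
The plan is to mirror the proof of Proposition~\ref{prop:degree-one-supporting-ray}, replacing $S^{[n]}$ by $S^{(n-1)}$ on the $b$-side. First, choose $N_E\in\Nef(S)_\QQ$ with $\NEbar(S)\cap N_E^\perp=\RR_{\ge0}[E]$, as before. On $S^{(n-1)}$, take $N_D^{(n-1)}$, where $N_D\in\Nef(S)_\QQ$ satisfies $\NEbar(S)\cap N_D^\perp=\RR_{\ge0}[D]$. This is semiample, since $N_D$ is semiample on $S$ by the base-point-free theorem and $N_D^{(n-1)}$ descends from the semiample class $\sum\pr_i^*N_D$ on $S^{n-1}$. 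Because $\NEbar(S^{(n-1)})$ is the image of $\NEbar(S^{n-1})$, which is generated by curves in single factors, we get $\NEbar(S^{(n-1)})=\Cone\langle C_{\gamma}^{(n-1)}\rangle$ over the $(-1)$-curves $\gamma$. Then $N_D^{(n-1)}\cdot C_\gamma^{(n-1)}=N_D\cdot\gamma$, which vanishes only for $\gamma=D$. Hence $\NEbar(S^{(n-1)})\cap (N_D^{(n-1)})^\perp=\RR_{\ge0}[C_D^{(n-1)}]$. On $S^{[n]}$, take $Q_F:=nF^{[n]}-\frac12B^{[n]}$, the analogue of the $Q_F$ above with $n+1$ replaced by $n$. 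The same intersection computation gives $Q_F\cdot C_{E'}^{[n]}=1$, $Q_F\cdot A^{[n]}=1$ and $Q_F\cdot F_{[n]}=0$, and $Q_F$ is semiample by the base-point-free theorem. Set
\[
L_{\mathrm{univ}}:=\pr_{\diff}^*N_E+\pr_b^*N_D^{(n-1)}+\pr_a^*Q_F .
\]
This is semiample and satisfies $L_{\mathrm{univ}}\cdot T_{\mathrm{univ}}=0$ by the pushforward table.

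Next, let $\gamma\in K_{\mathrm{univ}}\cap L_{\mathrm{univ}}^\perp$. Each of the three pushforwards lies in its Mori cone, and each term in the pairing with $L_{\mathrm{univ}}$ is nonnegative, so all three vanish. This forces $(\pr_{\diff})_*\gamma=aE$, $(\pr_b)_*\gamma=bC_D^{(n-1)}$ and $(\pr_a)_*\gamma=cF_{[n]}$. The relation $H^a=H^{\diff}+H^b$ from Lemma~\ref{lem:La=Ldiff+Lb-universal} then gives $cF=aE+bD$, and $D=2F-E$ yields $a=b$ and $c=2a$.

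To pin down $\gamma$, use that $C_E^{\diff}$, $C_D^a$, $A_a$ together with the other basis curves form a basis of $N_1(\Xcal)_\RR$ dual to the decomposition $N^1(\Xcal)_\RR=\bigoplus\RR H_i^{\diff}\oplus\bigoplus\RR H_i^b\oplus\RR B^a$. A class is determined by its pairings with $H^{\diff}$, $H^b$ and $B^a$. The first two pairings are fixed by $(\pr_{\diff})_*\gamma=aE$ and $(\pr_b)_*\gamma=aC_D^{(n-1)}$. The pairing with $B^a$ is fixed by $(\pr_a)_*\gamma=2aF_{[n]}$, giving $B^a\cdot\gamma=4an$. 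Now compute $B^a\cdot T_{\mathrm{univ}}=2(n-1)+2(n-1)+4=4n$. Together with Lemma~\ref{lem:degree-one-hilbert-relation}, this shows that $\gamma$ and $aT_{\mathrm{univ}}$ have the same pairings, so $\gamma=aT_{\mathrm{univ}}$. Conversely, $\RR_{\ge0}T_{\mathrm{univ}}$ lies in the intersection, which proves the claim.

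The main obstacle is the semiampleness and extremality on $S^{(n-1)}$ when $n-1\ge2$, since the symmetric product is singular and Fogarty-type statements are needed. I would avoid this by pulling everything back to $S^{n-1}$ and using that a finite quotient preserves nef and semiample classes and maps Mori cones surjectively. Everything else is the same bookkeeping as in the nested case.
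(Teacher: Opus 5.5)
Your proposal is correct and follows the paper's proof essentially step for step. You use the same $L_{\mathrm{univ}}=\pr_{\diff}^*N_E+\pr_b^*N_D^{(n-1)}+\pr_a^*Q_F$ with $Q_F=nF^{[n]}-\frac12B^{[n]}$, the same termwise vanishing forcing the pushforwards onto $E$, $C_D^{(n-1)}$, $F_{[n]}$ with $a=b$, $c=2a$, and the same identification $\gamma=aT_{\mathrm{univ}}$ via pairings with the spanning classes $H_i^{\diff}$, $H_i^b$, $B^a$; your explicit check $B^a\cdot T_{\mathrm{univ}}=4n=B^{[n]}\cdot 2F_{[n]}$ only makes explicit what the paper leaves implicit, and the ``dual basis'' remark is unnecessary (and not literally true), since only the spanning of $N^1(\Xcal)_\RR$ is used.
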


\begin{proof}
Choose $N_E,N_D\in\Nef(S)_\QQ$ such that
\[
\NEbar(S)\cap N_E^\perp=\RR_{\ge0}[E],
\qquad
\NEbar(S)\cap N_D^\perp=\RR_{\ge0}[D].
\]
Since $N_E-K_S=N_E+F$ and $N_D-K_S=N_D+F$ are ample, both divisors are semiample by the base-point-free theorem \cite[Theorem~3.3]{kollar1998birational}.
Put $Q_F:=nF^{[n]}-\frac12B^{[n]}$.
Then
\[
Q_F\cdot C_{E'}^{[n]}=1,
\qquad Q_F\cdot F_{[n]}=0,
\qquad Q_F\cdot A^{[n]}=1,
\]
for every $(-1)$-curve $E'$, and hence
$\NEbar(S^{[n]})\cap Q_F^\perp=\RR_{\ge0}[F_{[n]}]$.
Moreover,
\[
Q_F-K_{S^{[n]}}=(n+1)F^{[n]}-\frac12B^{[n]}
\]
has positive intersection with all the generators of $\NEbar(S^{[n]})$:
\[
(Q_F-K_{S^{[n]}})\cdot C_{E'}^{[n]}=2,
\quad
(Q_F-K_{S^{[n]}})\cdot F_{[n]}=1,
\quad
(Q_F-K_{S^{[n]}})\cdot A^{[n]}=1.
\]
Thus $Q_F$ is semiample, and likewise $N_D^{(n-1)}$ is semiample with
$\NEbar(S^{(n-1)})\cap(N_D^{(n-1)})^\perp
=\RR_{\ge0}[C_D^{(n-1)}]$.

Define
$L_{\mathrm{univ}}
:=\pr_{\diff}^*N_E+\pr_b^*N_D^{(n-1)}+\pr_a^*Q_F$.
Then $L_{\mathrm{univ}}\cdot T_{\mathrm{univ}}=0$.
For $\gamma\in K_{\mathrm{univ}}\cap L_{\mathrm{univ}}^\perp$,
\[
0
=
N_E\cdot(\pr_{\diff})_*\gamma
+
N_D^{(n-1)}\cdot(\pr_b)_*\gamma
+
Q_F\cdot(\pr_a)_*\gamma.
\]
Since each term is nonnegative, all three vanish.
Hence
\[
(\pr_{\diff})_*\gamma=aE,
\qquad
(\pr_b)_*\gamma=bC_D^{(n-1)},
\qquad
(\pr_a)_*\gamma=cF_{[n]}
\]
for some $a,b,c\ge0$.
Since $H^a=H^{\diff}+H^b$, we have
$cF=aE+bD$.
Substituting $D=2F-E$ gives $a=b$ and $c=2a$.
The three pushforwards of $\gamma$ agree with those of $aT_{\mathrm{univ}}$.
Hence their intersections with $H_i^{\diff}$, $H_i^b$, and $B^a$ agree.
For the last class,
$B^a\cdot\gamma=B^{[n]}\cdot(\pr_a)_*\gamma$.
Since these classes span $N^1(\Xcal)_\RR$, $\gamma=aT_{\mathrm{univ}}$.
\end{proof}

\begin{thm}
\label{thm:degree-one-universal-failure}
Let $S$ be a del Pezzo surface of degree $1$ and let $n\ge2$. 
Then
\[
\pr_{\diff}^*\Nef(S)
+\pr_b^*\Nef(S^{(n-1)})
+\pr_a^*\Nef(S^{[n]})
\subsetneq
\Nef(S^{[1,n]}).
\]
\end{thm}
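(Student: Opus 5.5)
The plan is to argue exactly as in Theorem~\ref{thm:degree-one-failure}, with $T_{\mathrm{univ}}$, $K_{\mathrm{univ}}$ and $L_{\mathrm{univ}}$ in place of $T$, $K$ and $L$. Suppose $T_{\mathrm{univ}}\in\NEbar(\Xcal)$. By Proposition~\ref{prop:degree-one-universal-supporting-ray}, $L_{\mathrm{univ}}$ is semiample, so a multiple of it defines a morphism $\varphi\colon\Xcal\to Y$. This morphism is not finite, since $L_{\mathrm{univ}}\cdot T_{\mathrm{univ}}=0$ and $T_{\mathrm{univ}}\ne0$. Choose an irreducible curve $\Gamma$ in a positive-dimensional fiber. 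Then $[\Gamma]\in K_{\mathrm{univ}}\cap L_{\mathrm{univ}}^\perp$, so $[\Gamma]=aT_{\mathrm{univ}}$ with $a>0$. Consequently
\[
(\pr_{\diff})_*\Gamma=aE,\qquad (\pr_b)_*\Gamma=aC_D^{(n-1)},\qquad (\pr_a)_*\Gamma=2aF_{[n]}.
\]

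The first step is to transfer $\Gamma$ to the nested scheme $S^{[n-1,n]}$. The morphism $(\res,p_a)\colon S^{[n-1,n]}\to S^{[1,n]}$ is surjective, since any $(p,Z)$ with $p\in\Supp Z$ admits a length-$(n-1)$ subscheme $Z''\subset Z$ whose residual is supported at $p$. Exactly as in the flag-lifting step of Lemma~\ref{lem:degree-one-collision-bound}, choose an irreducible curve $\Gamma'\subset S^{[n-1,n]}$ mapping onto $\Gamma$ with degree $r$. Since $\HC_{n-1}(Z'')=\HC_n(Z)-p$, we have $\HC_{n-1}\circ p_b=\pr_b\circ(\res,p_a)$ on $S^{[n-1,n]}$. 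This gives
\[
\res_*\Gamma'=raE,\qquad (\HC_{n-1}\circ p_b)_*\Gamma'=raC_D^{(n-1)},\qquad (p_a)_*\Gamma'=2raF_{[n]}.
\]
Lemma~\ref{lem:degree-one-collision-bound-hc} with $m=n-1$ then yields $\mathcal E_{n-1}\cdot\Gamma'\le(n+1)ra$.

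Next, compute $\mathcal E_{n-1}\cdot\Gamma'$ from below using $2\mathcal E_{n-1}=B^a-B^b$. The first term is $B^a\cdot\Gamma'=B^{[n]}\cdot 2raF_{[n]}=4nra$.

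For $n=2$ there is no $B^b$, and we get $\mathcal E_1\cdot\Gamma'=2ra>... $ more precisely $\mathcal E_1\cdot\Gamma'=2ra$ from $B^a$ alone would read $4nra/2=4ra>3ra$, a contradiction.

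For $n\ge3$, I need an upper bound on $B^{[n-1]}\cdot\beta$, where $\beta:=(p_b)_*\Gamma'\in\NEbar(S^{[n-1]})$. Only the Hilbert--Chow image of $\beta$ is known, so I would write
\[
\beta=\sum_{E'}c_{E'}C_{E'}^{[n-1]}+fF_{[n-1]}+\alpha A^{[n-1]}
\]
with all coefficients nonnegative, using \cite[Theorem~5.1]{BHL+16}. Pairing with the classes $H^{[n-1]}$ gives $\sum_{E'}c_{E'}E'+fF=raD$ in $N_1(S)_\RR$. Because $\RR_{\ge0}[D]$ is extremal in $\NEbar(S)$ and $F$ is not proportional to $D$, this forces $f=0$ and $c_{E'}=0$ for $E'\ne D$, hence $c_D=ra$. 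It follows that
\[
B^b\cdot\Gamma'=B^{[n-1]}\cdot\beta=(2n-4)ra-2\alpha\le(2n-4)ra.
\]
Combining the two terms,
\[
\mathcal E_{n-1}\cdot\Gamma'\ge\tfrac12\bigl(4n-(2n-4)\bigr)ra=(n+2)ra>(n+1)ra,
\]
a contradiction. Therefore $T_{\mathrm{univ}}\in K_{\mathrm{univ}}\setminus\NEbar(\Xcal)$. Since $K_{\mathrm{univ}}=P_{\mathrm{univ}}^\vee$, the inclusion is strict.

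I expect the main obstacle to be the transfer step: making sure the lift $\Gamma'$ really carries the stated pushforwards, in particular the identity $\HC_{n-1}\circ p_b=\pr_b\circ(\res,p_a)$ used for Lemma~\ref{lem:degree-one-collision-bound-hc}. The other delicate point is controlling $B^b$ when only the Hilbert--Chow class of $p_b(\Gamma')$ is known. This is handled by the extremality argument ruling out any $F_{[n-1]}$ component, together with the sign of $B^{[n-1]}\cdot A^{[n-1]}=-2$.
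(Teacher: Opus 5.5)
Your proposal is correct and follows the paper's proof essentially step for step. You use the same semiample $L_{\mathrm{univ}}$ and fiber curve $\Gamma$, the same lift to $S^{[n-1,n]}$ with the bound $\mathcal E_{n-1}\cdot\Gamma'\le(n+1)ra$ from Lemma~\ref{lem:degree-one-collision-bound-hc}, and the same extremality argument forcing $(p_b)_*\Gamma'=raC_D^{[n-1]}+\alpha A^{[n-1]}$, which gives $\mathcal E_{n-1}\cdot\Gamma'\ge(n+2)ra$. The only blemish is your garbled $n=2$ line, which should simply read $\mathcal E_1\cdot\Gamma'=\tfrac12B^a\cdot\Gamma'=4ra>3ra$. (The paper treats $n=2$ directly on the blow-up of $S\times S$ along the diagonal, which amounts to the same computation.)
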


\begin{proof}
Let $L_{\mathrm{univ}}$ be as in Proposition~\ref{prop:degree-one-universal-supporting-ray} and suppose that $T_{\mathrm{univ}}\in\NEbar(\Xcal)$.
Since $L_{\mathrm{univ}}$ is semiample and $L_{\mathrm{univ}}\cdot T_{\mathrm{univ}}=0$, choose an irreducible curve $\Gamma$ in a positive-dimensional fiber.
Then $[\Gamma]=aT_{\mathrm{univ}}$ for some $a>0$, and hence
\[
(\pr_{\diff})_*\Gamma=aE,
\qquad
(\pr_b)_*\Gamma=aC_D^{(n-1)},
\qquad
(\pr_a)_*\Gamma=2aF_{[n]}.
\]

For $n=2$, $(\pr_{\diff},\pr_b):S^{[1,2]}\to S\times S$ is the blow-up along the diagonal.
Let $\nu:C\to\Gamma$ be the normalization and set $f=\pr_{\diff}\circ\nu$, $g=\pr_b\circ\nu$.
Then $f_*[C]=aE$ and $g_*[C]=aD$, so $\Gamma$ is not contained in the exceptional divisor $\mathcal E$.
The argument of Lemma~\ref{lem:degree-one-collision-bound-hc} gives
\[
\mathcal E\cdot\Gamma\le E\cdot g_*[C]=3a.
\]
On the other hand, $B^a=2\mathcal E$, so
\[
\mathcal E\cdot\Gamma
=
\frac12B^a\cdot\Gamma
=
\frac a2B^{[2]}\cdot2F_{[2]}
=
4a,
\]
a contradiction.

For $n\ge3$, set $\Ycal:=S^{[n-1,n]}$ and consider
\[
\psi:\Ycal\longrightarrow\Xcal,
\qquad
(Z_{n-1}\subset Z_n)\longmapsto
(\res(Z_{n-1}\subset Z_n),Z_n).
\]
This morphism is projective and surjective: for $(p,Z_n)\in\Xcal$, the quotient
$\mathcal O_{Z_n,p}\twoheadrightarrow k(p)$
has kernel defining a length-$(n-1)$ subscheme $Z_{n-1}\subset Z_n$ with residual point $p$.
Choose an irreducible curve $\widetilde{\Gamma}\subset\Ycal$ dominating $\Gamma$ and let $r$ be its degree over $\Gamma$.
Then
\[
\res_*\widetilde{\Gamma}=arE,
\qquad
(\HC_{n-1}\circ p_b)_*\widetilde{\Gamma}=arC_D^{(n-1)},
\]
and Lemma~\ref{lem:degree-one-collision-bound-hc} gives
\[
\mathcal E_{n-1}\cdot\widetilde{\Gamma}\le(n+1)ar.
\]

Write
\[
(p_b)_*[\widetilde{\Gamma}]
=
\sum_{E'}x_{E'}C_{E'}^{[n-1]}
+yF_{[n-1]}
+cA^{[n-1]},
\qquad x_{E'},y,c\ge0.
\]
Since $(\HC_{n-1})_*C_{E'}^{[n-1]}=E'$,
$(\HC_{n-1})_*F_{[n-1]}=F$, and
$(\HC_{n-1})_*A^{[n-1]}=0$, applying $\HC_{n-1}$ gives
\[
arD
=
\sum_{E'}x_{E'}E'+yF.
\]
Since $\RR_{\ge0}[D]$ is an extremal ray of $\NEbar(S)$, the decomposition on the right can contain only the class $D$.
Hence $y=0$, $x_{E'}=0$ for $E'\ne D$, and $x_D=ar$.
Therefore
\[
(p_b)_*[\widetilde{\Gamma}]
=
arC_D^{[n-1]}+cA^{[n-1]}.
\]
Also $(p_a)_*[\widetilde{\Gamma}]=2arF_{[n]}$.
Using $2\mathcal E_{n-1}=B^a-B^b$,
\[
\mathcal E_{n-1}\cdot\widetilde{\Gamma}
=
\frac12\left(
B^{[n]}\cdot2arF_{[n]}
-
B^{[n-1]}\cdot
(arC_D^{[n-1]}+cA^{[n-1]})
\right)
=
(n+2)ar+c
\ge
(n+2)ar,
\]
contradicting the upper bound above.
Hence $T_{\mathrm{univ}}\notin\NEbar(\Xcal)$.
\end{proof}

\bibliographystyle{plain}
\bibliography{bib}

\end{document}